\newcommand{\bm}[1]{{\mbox{\boldmath $#1$}}}
\newcommand{\setmid}{\mathrel{}\middle|\mathrel{}}
\title{Counterexamples to the local-global principle for non-singular plane curves
and a cubic analogue of Ankeny-Artin-Chowla-Mordell conjecture}
\author{Yoshinosuke Hirakawa}
\author{Yosuke Shimizu}
\address[Yoshinosuke Hirakawa, Yosuke Shimizu]{Faculty of Science and Technology, Department of Mtathematics, Keio University, Hiyoshi 3-14-1, Kohoku, Yokohama, Kanagawa, Japan}
\email{hirakawa@keio.jp}
     \thanks{This research was supported by JSPS KAKENHI Grant Number JP15J05818,
     the Research Grant of Keio Leading-edge Laboratory of Science \& Technology (Grant Numbers 2018-2019 000036 and 2019-2020 000074).
     This research was supported in part by KAKENHI 18H05233.
     This research was conducted as part of the KiPAS program FY2014--2018 of the Faculty of Science and Technology at Keio University as well as the JSPS Core-to-Core program ``Foundation of a Global Research Cooperative Center in Mathematics focused on Number Theory and Geometry".}
\subjclass[2010]{primary 11D41, 
	secondary
		11D57; 
		11E76; 
		11N32; 
		11R16
		}
\keywords{Diophantine equations,
	local-global principle,
	cubic fields,
	primes represented by polynomials}
\date{\today}
\theoremstyle{plain}
 \newtheorem{theorem}{Theorem}[section]
 \crefname{theorem}{Theorem}{Theorems}
 \newtheorem{proposition}[theorem]{Proposition}
 \crefname{proposition}{Proposition}{Propositions}
 \newtheorem{lemma}[theorem]{Lemma}
 \crefname{lemma}{Lemma}{Lemmas}
 \crefname{corollary}{Corollary}{Corollaries}
 \newtheorem{conjecture}[theorem]{Conjecture}
 \crefname{conjecture}{Conjecture}{Conjectures}
 \crefname{question}{Question}{Questions}
 \crefname{problem}{Problem}{Problems}
\theoremstyle{definition} 
 \crefname{definition}{Definition}{Definitions}
 \crefname{example}{Example}{Examples}
 \crefname{remark}{Remark}{Remarks}
\begin{document}


\maketitle


\begin{abstract}
In this article, we introduce a systematic and uniform construction
of non-singular plane curves of odd degrees $n \geq 5$
which violate the local-global principle.
Our construction works unconditionally 
for $n$ divisible by $p^{2}$ for some odd prime number $p$.
Moreover, our construction also works
for $n$ divisible by some $p \geq 5$
which satisfies a conjecture on $p$-adic properties of the fundamental units of $\mathbb{Q}(p^{1/3})$ and $\mathbb{Q}((2p)^{1/3})$.
This conjecture is a natural cubic analogue of the classical Ankeny-Artin-Chowla-Mordell conjecture
for $\mathbb{Q}(p^{1/2})$
and easily verified numerically.
\end{abstract}



\section{Introduction}

In the theory of Diophantine equations,
the local-global principle for quadratic forms established by Minkowski and Hasse
is one of the major culminations (cf.\  \cite[Theorem 8, Ch.\  IV]{Serre}).

In contrast,
there exist many homogeneous forms of higher degrees which violate the local-global principle
(i.e., counterexamples to the local-global principle).
For example, Selmer \cite{Selmer} found that a non-singular plane cubic curve defined by
\begin{equation} \label{Selmer}
	3X^{3}+4Y^{3} = 5Z^{3}
\end{equation}
has rational points over $\mathbb{R}$ and $\mathbb{Q}_{p}$ for every prime number $p$
but not over $\mathbb{Q}$.
From \cref{Selmer},
we can easily construct reducible (especially singular) counterexamples of higher degrees.

After that, Fujiwara \cite{Fujiwara} found that a non-singular plane quintic curve defined by
\begin{equation} \label{Fujiwara_example}
	(X^{3}+5Z^{3})(X^{2}+XY+Y^{2}) = 17Z^{5}
\end{equation}
violates the local-global principle.
More recently,
Cohen \cite[Corollary 6.4.11]{Cohen} gave several counterexamples of the form $x^{p}+by^{p}+cz^{p} = 0$
of degree $p = 3, 5, 7, 11$ with $b, c \in \mathbb{Z}$,
and Nguyen \cite{Nguyen_QJM,Nguyen_Tokyo} gave recipes for counterexamples of even degrees and more complicated forms.

In \cite{Poonen-Voloch},
Poonen and Voloch made a qualitative conjecture on an old folklore that
most hypersurfaces of degree $n \geq d+3$ in the projective space $\mathbb{P}^{d+1}$
violate the local-global principle.
Probably based on this folklore and the Poonen-Voloch conjecture,
there are many works
for the existence and the proportion of counterexamples in certain classes
\cite{Bhargava,Bhargava-Gross-Wang,Browning,Clark,Dietmann-Marmon,Granville,Poonen-Voloch}.
Among them,
Dietmann and Marmon \cite[Theorem 2]{Dietmann-Marmon} proved that,
under the abc conjecture \cite{Masser},
among non-singular plane curves of degree $k$
\[
	AX^{k}+BY^{k} = CZ^{k} \quad (A, B, C \in \mathbb{Z} \setminus \{ 0 \})
\]
with rational points over $\mathbb{R}$ and $\mathbb{Q}_{p}$ for every prime number $p$,
100\% of them violate the local-global principle for every $k \in \mathbb{Z}_{\geq 6}$.
However, the argument in \cite[Theorem 2]{Dietmann-Marmon} cannot give any specific (conjectural) counterexamples
because the abc conjecture is ineffective to estimate the candidates of $A, B, C$.
It may be surprising that
there seems to be no other concrete counterexamples
to the local-global principle for non-singular plane curves of odd degrees $\geq 5$ than
\cite{Cohen,Fujiwara,Fujiwara-Sudo}.

In this article, we exhibit how to construct such counterexamples of various odd degrees
in a systematic and uniform manner.
The following \cref{main_odd} is the main theorem of this article.
We should emphasise that
although it is unclear from the statement,
in the proof,
we shall exhibit how to generate parameters in the following equations,
i.e., we can generate as many as we want explicit counterexamples like \cref{Selmer,Fujiwara_example}.

\begin{theorem} \label{main_odd}
Let $p$ be an odd prime number.
Set $P = 2p \ \text{or} \ p$ so that $P \not\equiv \pm 1 \bmod{9}$.
Let $\epsilon = \alpha + \beta p^{1/3} + \gamma p^{2/3} \in \mathbb{R}_{> 1}$
be the fundamental unit of $\mathbb{Q}(P^{1/3})$
with $\alpha, \beta, \gamma \in \mathbb{Z}$.
Set
\[
	\iota =
	\begin{cases}
	1 & \text{if $\beta \not\equiv 0 \pmod{p}$ or $\beta \equiv \gamma \equiv 0 \pmod{p}$} \\
	2 & \text{if $\beta \equiv 0 \pmod{p}$ and $\gamma \not\equiv 0 \pmod{p}$}
	\end{cases}.
\]
Let $n \in \mathbb{Z}_{\geq 5}$ be an odd integer divisible by $p^{\iota}$.
Then, there exist
infinitely many $(n-3)/2$-tuples of pairs of integers $(b_{j}, c_{j})$ ($1 \leq j \leq (n-3)/2$)
satisfying the following condition:

There exist infinitely many $L \in \mathbb{Z}$ such that the equation
\begin{equation} \label{equation}
	(X^{3}+P^{\iota}Y^{3}) \prod_{j = 1}^{\frac{n-3}{2}} (b_{j}^{2}X^{2}+b_{j}c_{j}XY+c_{j}^{2}Y^{2}) = LZ^{n}
\end{equation}
define non-singular plane curves of degree $n$ which violate the local-global principle.

Moreover, for each $n \geq 5$ divisible by $p^{\iota}$,
there exists a set of such $(n-3)/2$-tuples $((b_{j}, c_{j}))_{1 \leq j \leq (n-3)/2}$
which gives infinitely many geometrically non-isomorphic classes of such curves of degree $n$.
\end{theorem}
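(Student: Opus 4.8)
\emph{Strategy of the proof.} Set $K=\mathbb{Q}(P^{1/3})$ and $\theta=P^{1/3}$, so that the cubic factor in \eqref{equation} is the norm form $X^{3}+P^{\iota}Y^{3}=N_{K/\mathbb{Q}}(X+\theta^{\iota}Y)$ and each quadratic factor is $b_{j}^{2}X^{2}+b_{j}c_{j}XY+c_{j}^{2}Y^{2}=N_{\mathbb{Q}(\zeta_{3})/\mathbb{Q}}(b_{j}X-c_{j}\zeta_{3}Y)$, of discriminant $-3b_{j}^{2}c_{j}^{2}<0$. Since $P\in\{p,2p\}$ is squarefree and $P\not\equiv\pm1\pmod 9$, one has $\mathcal{O}_{K}=\mathbb{Z}[\theta]$ and $p$ is totally ramified, $p\mathcal{O}_{K}=\mathfrak{p}^{3}$ with $\mathfrak{p}=(p,\theta)$, so $\mathcal{O}_{K}/\mathfrak{p}^{k}\cong\mathbb{F}_{p}[\theta]/(\theta^{k})$ for $k\le 3$; moreover $v_{p}(N_{K/\mathbb{Q}}(\,\cdot\,))=v_{\mathfrak{p}}(\,\cdot\,)$ with $v_{\mathfrak{p}}(\theta)=1$. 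The plan is: from a hypothetical rational point I would derive an identity $X+\theta^{\iota}Y=\delta\,\epsilon^{a}\kappa^{n}$ in $K$ with $\delta$ ranging over a fixed finite set and $0\le a<n$, and then contradict it by a $\mathfrak{p}$-adic computation exploiting both $p\mid n$ and the $p$-divisibility pattern of $(\alpha,\beta,\gamma)$ recorded by $\iota$; separately, I would arrange everywhere-local solubility and geometric genericity by imposing congruence conditions on $(b_{j},c_{j},L)$.

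\emph{Step 1: descent.} Let $(X:Y:Z)\in\mathbb{P}^{2}(\mathbb{Q})$, $X,Y,Z\in\mathbb{Z}$, $\gcd(X,Y,Z)=1$, lie on \eqref{equation}. If $Z=0$ the left-hand binary form $G(X,Y)$ vanishes; but $P^{\iota}$ is not a cube and each quadratic factor is positive definite (as $b_{j}c_{j}\ne 0$), so $X=Y=0$, impossible; hence $Z\ne 0$. I would require $b_{j}c_{j}\ne 0$ and the ratios $c_{j}/b_{j}$ pairwise distinct (making $G$ separable, hence the curve non-singular), take $L$ to be $n$-th-power-free, and let $S$ be the finite set of $p$, of $3$, and of the prime divisors of $L$ and of the nonzero resultants of pairs of distinct factors of $G$. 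For $q\notin S$ dividing $\gcd(X,Y)$ one gets $q^{n}\mid G(X,Y)=LZ^{n}$ but $q\nmid Z$, so $q^{n}\mid L$, a contradiction; hence $\gcd(X,Y)=1$, the factors of $G(X,Y)$ are pairwise coprime outside $S$, and $v_{q}(X^{3}+P^{\iota}Y^{3})\equiv 0\pmod n$ for all $q\notin S$. A short check (using $\gcd(X,Y)=1$) shows that above each $q\notin S$ at most one prime of $K$ divides $X+\theta^{\iota}Y$ and it has residue degree $1$, so $v_{\mathfrak q}(X+\theta^{\iota}Y)\equiv 0\pmod n$ for every prime $\mathfrak q$ of $K$ not above $S$. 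A standard $S$-descent (absorbing the class-group and $S$-unit contributions into $\delta$) then yields
\[
	X+\theta^{\iota}Y=\delta\,\epsilon^{a}\,\kappa^{n}
\]
with $\kappa\in K^{\times}$, $a\in\{0,\dots,n-1\}$, $\delta$ in a fixed finite set; comparing $v_{\mathfrak{p}}$ (which on the left is $v_{p}(X^{3}+P^{\iota}Y^{3})\in\{0,\iota\}$ by a direct computation) forces $v_{\mathfrak{p}}(\kappa)=0$ and $v_{\mathfrak{p}}(\delta)\in\{0,\iota\}$.

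\emph{Step 2: the $\mathfrak{p}$-adic contradiction (main obstacle).} Because $p\mid n$, every $\mathfrak{p}$-adic unit $\kappa$ satisfies $\kappa^{n}\equiv(\kappa\bmod\mathfrak{p})^{n}\pmod{\mathfrak{p}^{2}}$ (the $\binom{n}{1}$-term dies), and weighing $v_{\mathfrak{p}}(\binom{n}{j})$ against the powers of $\theta$ that occur shows $\kappa^{n}$ stays congruent to a rational integer modulo $\mathfrak{p}^{3}$, and even modulo $\mathfrak{p}^{4}$ (indeed modulo $p^{2}$) when $p^{2}\mid n$. Reducing the displayed identity modulo $\mathfrak{p}^{2}\cong\mathbb{F}_{p}[\theta]/(\theta^{2})$, with $\epsilon\equiv\alpha+\beta\theta$, gives $X+\theta^{\iota}Y\equiv\delta c\,(\alpha^{a}+a\alpha^{a-1}\beta\theta)$ for some $c\in\mathbb{F}_{p}^{\times}$. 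If $\iota=1$ and $\beta\not\equiv 0\pmod p$, matching coefficients pins $X,Y$ modulo $p$ in terms of $(\delta,a,c)$, and a further expansion modulo $\mathfrak{p}^{3}$ over-determines these and rules out every admissible $(\delta,a)$. If $\iota=2$ (so $\beta\equiv 0\pmod p$ and $p^{2}\mid n$), the $\theta$-coefficient relation is vacuous modulo $\mathfrak{p}^{2}$, so one works modulo $\mathfrak{p}^{4}$, where $\kappa^{n}$ is still rational, $\epsilon^{a}\equiv\alpha^{a}+a\alpha^{a-1}\gamma\theta^{2}$, and the non-vanishing of $\gamma$ modulo $p$ forces an impossible congruence (the case $v_{\mathfrak{p}}(\delta)=2$ being handled similarly). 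The residual case $\beta\equiv\gamma\equiv 0\pmod p$ (with $\iota=1$) is precisely the one excluded by the conjectural cubic analogue of the Ankeny--Artin--Chowla--Mordell conjecture; when $p^{2}\mid n$ it is closed unconditionally by the same higher-precision expansion. Carrying this out uniformly over the finitely many $(\delta,a)$ is the technical heart and the step I expect to be the hardest.

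\emph{Step 3: local solubility, infinitude, and moduli.} All constraints on $(b_{j},c_{j},L)$ so far are congruence conditions modulo a fixed integer together with $L>0$; to these I would add finitely more guaranteeing a smooth $\mathbb{Z}_{q}$-point at each $q\in S$, and then use $L>0$ over $\mathbb{R}$, and the Weil bound plus Hensel's lemma for the primes of good reduction exceeding $(n-1)^{2}$ (folding the remaining good primes into $S$). By the Chinese Remainder Theorem infinitely many tuples satisfy everything; each yields an everywhere locally soluble curve with, by Steps 1--2, no rational point, hence a counterexample to the local--global principle, and letting $L$ run over an arithmetic progression (rescaling $Z$) gives infinitely many for a fixed tuple. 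For the last assertion I would instead vary the tuple: \eqref{equation} presents $C$ as the degree-$n$ cyclic cover $C\to\mathbb{P}^{1}$, $(X:Y:Z)\mapsto(X:Y)$, branched exactly over the roots of $G$ --- the three fixed distinct points $-\theta^{\iota}\zeta_{3}^{k}$ together with the pairs $\{(c_{j}/b_{j})\zeta_{3},(c_{j}/b_{j})\zeta_{3}^{2}\}$. Since $g(C)=(n-1)(n-2)/2\ge 6$, $\operatorname{Aut}(C)$ is finite, so $C$ contains only finitely many subgroups isomorphic to $\mathbb{Z}/n$ and hence determines only finitely many $\operatorname{PGL}_{2}(\overline{\mathbb{Q}})$-orbits of $n$-point subsets of $\mathbb{P}^{1}$ as possible branch loci; normalizing the three fixed branch points to $\{0,1,\infty\}$ shows the branch locus of our $C$ determines, and is determined by, the unordered set of those pairs, each of which determines $c_{j}/b_{j}$. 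Running $c_{1}$ through an arithmetic progression (all other parameters fixed and all congruences preserved) therefore yields infinitely many pairwise $\overline{\mathbb{Q}}$-non-isomorphic such curves, completing the proof.
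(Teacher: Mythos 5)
Your Step 2 is asserted rather than proved, and it is exactly where the content of the theorem lies. After your $S$-descent you must exclude $X+\theta^{\iota}Y=\delta\,\epsilon^{a}\kappa^{n}$ for \emph{every} admissible pair $(\delta,a)$, where $\delta$ absorbs class-group representatives, $S$-units, and the primes dividing $L$ and the resultants; but such an exclusion cannot hold for generic parameters --- for many $L$ the equation $x^{3}+P^{\iota}y^{3}=Lz^{n}$ simply has primitive solutions --- so the contradiction has to be manufactured by a very particular choice of $L$, and your proposal gives neither a mechanism for that choice nor any reason that one (let alone infinitely many) suitable $L$ exists. In the paper this is the whole of Section 3: one takes $L=l^{m}$ with $l\equiv 2\pmod 3$ a prime represented by the binary cubic form $a^{3}+P^{2\iota}c^{3}$, so that the degree-one prime $\mathfrak{p}_{l}$ is principal with a generator of known shape (the infinitude of such $l$ is \emph{not} a congruence condition; it requires the Heath-Brown--Moroz theorem, \cref{HBM}); one needs the class-number information of \cref{class_number} (class number of $\mathbb{Q}(p^{1/3})$ less than $p$, resp.\ prime to $p$ for $\mathbb{Q}((2p)^{1/3})$) to make $\mathfrak{w}^{n/p^{\iota}}$ principal in the descent of \cref{Fujiwara}; and one must choose an even exponent $m<p$ so that the $\pi^{2\iota}$-coefficient $C_{k}$ of $\epsilon^{k}(a+c\pi^{2\iota})^{m}$ is nonzero mod $p$ for \emph{all} $k\in\mathbb{Z}$, which \cref{quadratic_reduction} converts into a quadratic non-residue condition mod $p$, secured via an explicit P\'olya--Vinogradov bound for $p\geq 37$ and case-by-case checks for small $p$ (\cref{heuristic_recipe}). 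None of these ingredients appears in your sketch, and your Step 3 claim that ``all constraints on $(b_{j},c_{j},L)$ so far are congruence conditions'' settled by CRT is accordingly false: the decisive constraint on $L$ is representability by a cubic form together with a character-sum condition on $m$.

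A second, related defect is structural: by descending on the full degree-$n$ form you must carry the quadratic factors and all primes of $S$ inside $\delta$, which makes the uniform mod-$\mathfrak{p}^{3}$/mod-$\mathfrak{p}^{4}$ exclusion you invoke unmanageable. The paper avoids this by requiring each $P^{\iota}b_{j}^{3}+c_{j}^{3}$ to be a prime $\equiv 2\pmod 3$ prime to $P$, $L$ a product of primes $\equiv 2\pmod 3$ with $v_{l}(L)<n$, and $\gcd(L,b_{j}c_{j})=1$; \cref{global_unsolubility} then forces $\gcd\bigl((X^{3}+P^{\iota}Y^{3})L,\,b_{j}^{2}X^{2}+b_{j}c_{j}XY+c_{j}^{2}Y^{2}\bigr)=1$, hence an exact Fermat-type equation $X^{3}+P^{\iota}Y^{3}=Lz^{n}$, to which the unit/$\mathfrak{p}$-adic argument applies cleanly. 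Your expansion of $\kappa^{n}$ modulo powers of $\mathfrak{p}$ using $p^{\iota}\mid n$ and the $p$-divisibility pattern of $(\beta,\gamma)$ is in the same spirit as \cref{Fujiwara,quadratic_reduction}, and your branch-locus argument for geometric non-isomorphy matches \cref{infinitude}; but without the prime-representation input, the class-number lemma, and the residue analysis selecting $m$, the global-unsolubility half of the theorem is not established.
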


In particular,
if $\beta \not\equiv 0 \pmod{p}$ for a prime number $p$,
then we can generate an infinite family of explicit counterexamples for every odd degree $n \equiv 0 \pmod{p}$.
The authors conjecture that this hypothesis is always true whenever $p \neq 3$:
\footnote{
Note that \cref{AACM_cubic} holds if and only if
$\mathbb{Q}(P^{1/3})$ has a unit $\alpha_{0}+\beta_{0} P^{1/3}+\gamma_{0} P^{2/3}$
with $\alpha_{0}, \beta_{0}, \gamma_{0} \in (1/3)\mathbb{Z}$
such that $\beta_{0} \not\equiv 0 \pmod{p}$.
The authors verified \cref{AACM_cubic} for all $p < 10^5$ by Magma \cite{Magma}.
For the detail, see Appendix B.
}

\begin{conjecture} \label{AACM_cubic}
Let $p \neq 3$ be a prime number,
$P = p \ \text{or} \ 2p$,
and $\epsilon = (\alpha+\beta p^{1/3}+\gamma p^{2/3})/3 \in \mathbb{R}_{> 1}$
be the fundamental unit of $\mathbb{Q}(P^{1/3})$
with $\alpha, \beta, \gamma \in \mathbb{Z}$.
Then, we have $\beta \not\equiv 0 \pmod{p}$.
\end{conjecture}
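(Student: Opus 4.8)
The plan is to reduce \cref{AACM_cubic} to a nonvanishing-modulo-$p$ statement for a $p$-adic $L$-value, in exact parallel with the Iwasawa-theoretic reformulations of the classical Ankeny-Artin-Chowla-Mordell conjecture. A complete unconditional proof seems out of reach with current methods, but the reduction explains why the conjecture should hold and underlies its numerical verification.

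\emph{Step 1: a $p$-adic reformulation.} Write $K = \mathbb{Q}(P^{1/3})$ and let $\mathfrak p$ be its unique prime above $p$. Since $p \neq 3$ and $v_p(P) = 1$, the polynomial $X^{3}-P$ is Eisenstein at $p$, so $K_{\mathfrak p}/\mathbb{Q}_p$ is totally ramified of degree $3$ with uniformiser $\pi = P^{1/3}$, residue field $\mathbb{F}_p$, and $\{1,\pi,\pi^{2}\}$ a $\mathbb{Z}_p$-basis of $\mathcal{O}_{K_{\mathfrak p}}$. From $3\epsilon = \alpha+\beta\pi+\gamma\pi^{2}$, comparing the $\mathfrak p$-adic valuations of the three summands gives $\alpha\in\mathbb{Z}_p^{\times}$; expanding $\epsilon^{p-1}=(\alpha/3)^{p-1}\bigl(1+(\beta/\alpha)\pi+(\gamma/\alpha)\pi^{2}\bigr)^{p-1}$ modulo $\mathfrak p^{2}$ shows $v_{\mathfrak p}(\epsilon^{p-1}-1)\geq 2 \iff p\mid\beta$. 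For $p\geq 5$ the $p$-adic logarithm converges on $1+\mathfrak p$ and $v_{\mathfrak p}(\log_p\epsilon)=v_{\mathfrak p}(\epsilon^{p-1}-1)$ (the case $p=2$, where $P=2$, is immediate by direct computation), so
\[
  p\mid\beta \iff v_{\mathfrak p}(\log_p\epsilon)\geq 2 .
\]
The same reduction modulo $\mathfrak p^{2}$ shows that for $p\nmid k$ one has $p\mid\beta(\epsilon^{k})\iff p\mid\beta(\epsilon)$, matching the footnote. Now the Galois closure $\widetilde K = K(\zeta_3)$ is abelian over $\mathbb{Q}(\sqrt{-3})$, so Leopoldt's conjecture holds for $\widetilde K$, hence for $K$, by Brumer's theorem; thus $\log_p\epsilon\neq 0$ and $v_{\mathfrak p}(\log_p\epsilon)\geq 1$ unconditionally. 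Therefore \cref{AACM_cubic} is equivalent to the assertion that the $p$-adic regulator $R_p(K)$ has the minimal possible valuation, namely $v_{\mathfrak p}(\log_p\epsilon)=1$.

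\emph{Step 2: identify $R_p(K)$ with a $p$-adic $L$-value.} One has $\zeta_K(s)=\zeta(s)L(s,\rho)$, where $\rho$ is the two-dimensional irreducible representation of $\mathrm{Gal}(\widetilde K/\mathbb{Q})\cong S_3$, and $\rho=\mathrm{Ind}_{F}^{\mathbb{Q}}\psi$ for an anticyclotomic Hecke character $\psi$ of $F=\mathbb{Q}(\sqrt{-3})$ of order $3$, so $L(s,\rho)=L(s,\psi)$. The $p$-adic class number formula for $K$ (valid since Leopoldt holds) then reads $\zeta_{K,p}(s)=\zeta_p(s)L_p(s,\psi)$, with $\zeta_p$ the Kubota-Leopoldt zeta function and $L_p(s,\psi)$ a $p$-adic interpolation of the Hecke $L$-values $L(s,\psi)$; comparing residues at $s=1$ expresses $v_p(R_p(K))$ through $v_p\bigl(L_p(1,\psi)\bigr)$, $v_p(h_K)$ and explicit Euler-type and discriminant factors. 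When $p\equiv 1\pmod 3$, so that $p$ splits in $F$, one expects $L_p(s,\psi)$ to be captured by Katz's anticyclotomic $p$-adic $L$-function of $F$ (the fact that $p$ divides the conductor of $\psi$ needs care here), and then \cref{AACM_cubic} becomes the assertion that the relevant Katz $p$-adic $L$-value is a $p$-adic unit up to those explicit factors; the case $p\equiv 2\pmod 3$, where $p$ is inert in $F$, requires the inert Katz $p$-adic $L$-function and is technically heavier.

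\emph{Step 3, and the main obstacle.} Feeding this $L$-value into Rubin's proof of the main conjecture for imaginary quadratic fields identifies $L_p(\cdot,\psi)$, up to a unit, with a generator of the characteristic ideal of the relevant Iwasawa module, and Gillard's theorem ($\mu=0$ for the anticyclotomic Katz $p$-adic $L$-function when $p$ splits) removes the $\mu$-part; the conjecture thereby reduces to a $\lambda$-statement, concretely that a certain period is a $p$-adic unit and that the $p$-adic $L$-value does not vanish modulo $p$. This last nonvanishing is the genuine difficulty: it is a faithful cubic analogue of the Ankeny-Artin-Chowla-Mordell conjecture and of Vandiver-type statements, and I do not expect an unconditional resolution. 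What the programme does deliver is a conceptual reason to expect \cref{AACM_cubic}; a congruence $\beta/\alpha\equiv B_\psi\pmod p$, obtained by reducing the $p$-adic class number formula modulo $p$, with $B_\psi$ an explicit generalised Bernoulli-Hurwitz number attached to $\psi$ (for the lattice $\mathbb{Z}[\zeta_3]$), which both explains the heuristic probability $1/p$ for $p\mid\beta$ and makes the numerical check for $p<10^{5}$ essentially immediate; and unconditional special cases, for instance whenever $p\nmid h_K$ and $p\nmid B_\psi$ can be verified by elementary or genus-theoretic means.
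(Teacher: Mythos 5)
You were asked about \cref{AACM_cubic}, which the paper itself does not prove: it is stated as a conjecture, supported only by numerical verification (Magma, for all $p<10^{5}$, as described in the Appendix) and by analogy with the classical Ankeny--Artin--Chowla--Mordell conjecture, and the paper's main theorem is formulated so as to be conditional on it (or to avoid it via the $\iota=2$ case). So there is no proof in the paper to compare against, and your proposal is not a proof either, as you yourself concede: your chain of reductions terminates, in Step 3, in the assertion that a certain $p$-adic $L$-value does not vanish modulo $p$, which is essentially a restatement of the conjecture rather than a resolution of it. That terminal nonvanishing is the entire content of the problem, so the gap is not a repairable detail but the whole difficulty.

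On the intermediate steps: Step 1 is sound and is essentially the paper's footnote observation in $p$-adic dress ($p\mid\beta$ iff $v_{\mathfrak p}(\epsilon^{p-1}-1)\geq 2$, stability under prime-to-$p$ powers), and invoking Brumer's theorem only tells you $\log_p\epsilon\neq 0$, which contributes nothing to the mod-$p$ question. Step 2 has genuine unresolved obstructions beyond the one you flag: the cubic character $\psi$ of $F=\mathbb{Q}(\sqrt{-3})$ cutting out $K(\zeta_3)$ has conductor divisible by the primes of $F$ above $p$ (since $K/\mathbb{Q}$ is ramified at $p$), which places it outside the usual interpolation range of Katz's $p$-adic $L$-function; and for $p\equiv 2\pmod 3$ the prime $p$ is inert in $F$, the non-ordinary case, where neither the Katz construction nor Rubin's main conjecture nor Gillard's $\mu=0$ theorem applies in the form you would need. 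So even the conditional reduction ``\cref{AACM_cubic} $\iff$ a Katz $L$-value is a $p$-adic unit'' is not established as stated. What you have is a reasonable research programme and heuristic (the $1/p$ probability, a route to faster numerical verification), but as a proof attempt it fails at the decisive step, exactly as any known approach to the classical \cref{AACM} does.
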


In fact,
\cref{AACM_cubic} is a natural cubic field analogue of the following more classical conjecture
for the real quadratic field $\mathbb{Q}(p^{1/2})$,
\footnote{
For numerical verification of \cref{AACM},
see e.g. \cite{AAC_verification,AAC_verification_2}.
}
whose origin goes back to Ankeny-Artin-Chowla \cite{AAC} for $p \equiv 1 \pmod{4}$
and Mordell \cite{Mordell_Pellian_II} for $p \equiv 3 \pmod{4}$ respectively:

\begin{conjecture} \label{AACM}
Let $p \neq 2$ be a prime number,
and $\epsilon = (\alpha+\beta p^{1/2})/2 \in \mathbb{R}_{> 1}$ be the fundamental unit of $\mathbb{Q}(p^{1/2})$ with $\alpha, \beta \in \mathbb{Z}$.
Then, we have $\beta \not\equiv 0 \pmod{p}$.
\end{conjecture}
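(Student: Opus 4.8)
\cref{AACM} is the classical Ankeny--Artin--Chowla--Mordell conjecture, so what follows is a plan rather than a proof: the plan is to reduce the statement to the non-vanishing modulo $p$ of a $p$-adic $L$-value, and the hard part --- explained at the end --- is that this $L$-value must be controlled uniformly as $p$ varies, a ``horizontal'' Fermat-quotient problem for which no general method is known.

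First I would pass to the completion of $\mathcal{O}_{\mathbb{Q}(p^{1/2})}$ at the ramified prime $\mathfrak{p} = (p^{1/2})$, a totally ramified quadratic extension of $\mathbb{Z}_{p}$ in which $\mathfrak{p}^{2} = (p)$. Writing $\varepsilon = (\alpha+\beta p^{1/2})/2$ and using that $N(\varepsilon) = \pm 1$ forces $\varepsilon$ to reduce to a unit of the residue field $\mathbb{F}_{p}$, a short expansion via the Teichm\"uller decomposition $\varepsilon = \omega(\overline{\varepsilon})(1+\delta)$ gives
\[
	\varepsilon^{p-1} \equiv 1 - (\beta/\alpha)\, p^{1/2} \pmod{\mathfrak{p}^{2}}.
\]
Since $\mathfrak{p}^{2} = (p)$ and $p \nmid \alpha$, this shows $\beta \equiv 0 \pmod{p}$ \emph{if and only if} $\varepsilon^{p-1} \equiv 1 \pmod{p\,\mathcal{O}_{\mathbb{Q}(p^{1/2})}}$, i.e. the fundamental unit is a ``Wieferich unit'' for $p$ (equivalently, $\log_{p}\varepsilon$ is divisible by $\mathfrak{p}^{2}$, not merely by $\mathfrak{p}$). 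Thus \cref{AACM} asserts that this never happens. Next I would combine this with the analytic and the $p$-adic (Leopoldt) class number formulas for $\mathbb{Q}(p^{1/2})$: the analytic one writes $L(1,\chi)$ --- for the even Dirichlet character $\chi$ cutting out $\mathbb{Q}(p^{1/2})$ --- as an explicit nonzero rational multiple of $h\log\varepsilon/p^{1/2}$, and Leopoldt's formula writes $L_{p}(1,\chi)$ in terms of $\log_{p}\varepsilon$ and a Gauss sum in such a way that, together with the computation above, $v_{p}(L_{p}(1,\chi)) > 0$ precisely when $p \mid \beta$. Hence \cref{AACM} is equivalent to $L_{p}(1,\chi) \not\equiv 0 \pmod{p}$ for every $p$, and --- by Kummer-type congruences --- to the non-vanishing modulo $p$ of a generalized Bernoulli number; this is exactly the content of the classical congruence of Ankeny--Artin--Chowla \cite{AAC} (and of Mordell \cite{Mordell_Pellian_II} for $p \equiv 3 \pmod 4$) that expresses $\beta/\alpha$ modulo $p$ as an explicit harmonic-type sum.

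To establish this non-vanishing I would pursue two routes in parallel. Iwasawa-theoretically, I would relate $v_{p}(L_{p}(1,\chi))$, via the Iwasawa Main Conjecture, to the $\lambda$-invariant (equivalently, the order of vanishing at $\chi$ of the characteristic power series) of the relevant eigencomponent of the unramified Iwasawa module over the cyclotomic $\mathbb{Z}_{p}$-extension, and try to prove that this component is trivial, or at least has $\lambda = 0$. Analytically, I would expand $L_{p}(s,\chi)$ as an Iwasawa power series, reduce the claim to a congruence among logarithmic derivatives of Morita's $p$-adic $\Gamma$-function, and bound it below. \emph{The hard part} is that both routes deliver the conclusion only for a set of primes of density one, or conditionally on analogues of Vandiver's conjecture or on $\lambda = 0$ holding for \emph{every} $p$: no effective lower bound for $v_{p}(L_{p}(1,\chi))$ uniform in $p$ is known, and the naive probabilistic model --- in which $\beta \bmod p$ is equidistributed, so that $\beta \equiv 0 \pmod p$ occurs with ``probability'' $\sim 1/p$ while $\sum_{p} 1/p = \infty$ --- even predicts that counterexamples exist (although none has been found), so \cref{AACM} (and its cubic analogue \cref{AACM_cubic}) could conceivably be false. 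For that reason the only argument that currently goes through is the computational one the authors adopt: check $\beta \not\equiv 0 \pmod p$ directly for all primes $p$ below a large bound (they report $p < 10^{5}$), which already renders \cref{main_odd} unconditional for every odd $n \geq 5$ divisible by some prime $p < 10^{5}$, and leaves only the remaining primes to \cref{AACM}.
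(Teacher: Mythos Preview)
You correctly recognize that \cref{AACM} is stated in the paper as a \emph{conjecture}, not a theorem: the paper offers no proof, and none is expected, since this is the classical Ankeny--Artin--Chowla--Mordell conjecture, which remains open. Your exposition of the standard equivalences (Wieferich-type condition on $\varepsilon$, $p$-adic $L$-values, generalized Bernoulli numbers) is accurate and well-informed, and your honest assessment that no uniform lower bound is known --- and that the naive heuristic even predicts counterexamples --- is exactly the state of the art. There is nothing in the paper to compare your plan against.

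One small conflation in your final paragraph deserves correction. The numerical verification for $p < 10^{5}$ that the paper reports in its Appendix is for \cref{AACM_cubic} (the cubic analogue), not for \cref{AACM}; for \cref{AACM} itself the paper only cites external references in a footnote. Likewise, \cref{main_odd} depends on \cref{AACM_cubic}, not on \cref{AACM}: the quadratic conjecture appears in the paper solely as historical motivation for the cubic one and plays no logical role in the results. So your closing sentence --- about the computation rendering \cref{main_odd} unconditional for $n$ divisible by some $p < 10^{5}$ and leaving the rest to the conjecture --- is correct in spirit but should refer to \cref{AACM_cubic} throughout, not \cref{AACM}.
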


A key ingredient of our construction is the following theorem on the distribution of prime numbers
represented by binary cubic polynomials:

\begin{theorem} [{\cite[Theorem 1]{HBM2004}}] \label{HBM}
Let $f_{0} \in \mathbb{Z}[X, Y]$ be an irreducible binary cubic form,
$\rho \in \mathbb{Z}$,
$(\gamma_{1}, \gamma_{2}) \in \mathbb{Z}^{\oplus 2}$,
and $\gamma_{0}$ be
the greatest common divisor of the coefficients of
$f_{0}(\rho x+\gamma_{1}, \rho y+\gamma_{2})$.
Set $f(x, y) := \gamma_{0}^{-1}f_{0}(\rho x+\gamma_{1}, \rho y+\gamma_{2})$.
Suppose that $\gcd(f(\mathbb{Z}^{\oplus 2})) = 1$.
Then, the set $f(\mathbb{Z}^{\oplus 2})$
contains infinitely many prime numbers.
\end{theorem}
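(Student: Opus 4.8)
\emph{Proof sketch.} This is the theorem of Heath-Brown and Moroz \cite{HBM2004}, whose proof extends Heath-Brown's earlier treatment of primes of the form $x^{3}+2y^{3}$. The plan is to produce, for a suitable homothetically expanding region $R(P)\subset\mathbb{R}^{2}$ on which $f$ is positive and of size $\asymp P^{3}$ (such a region exists because a nonzero binary cubic form is positive on some open set), a lower bound of the expected order of magnitude
\[
\#\bigl\{(a,b)\in R(P)\cap\mathbb{Z}^{\oplus 2}: f(a,b)\ \text{is prime}\bigr\}\ \gg\ \mathfrak{S}(f)\,\frac{P^{2}}{\log P},
\]
which forces $f(\mathbb{Z}^{\oplus 2})$ to contain infinitely many primes as soon as the singular series $\mathfrak{S}(f)=\prod_{\ell}\sigma_{\ell}$ is positive. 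This is precisely where the hypothesis $\gcd(f(\mathbb{Z}^{\oplus 2}))=1$ is used: by the Chinese Remainder Theorem it is equivalent to the statement that for every prime $\ell$ there is a pair $(a,b)$ with $\ell\nmid f(a,b)$, which makes every $\sigma_{\ell}$ strictly positive; and $\prod_{\ell}\sigma_{\ell}$ converges to a nonzero value since the number of projective zeros of $f$ over $\mathbb{F}_{\ell}$ is $1$ on average (Chebotarev for the splitting field of $f(t,1)$), so that $\sigma_{\ell}=1+O(\ell^{-1})$ with a main error that averages out. The affine substitution $x\mapsto\rho x+\gamma_{1}$, $y\mapsto\rho y+\gamma_{2}$ together with the normalising factor $\gamma_{0}^{-1}$ serve only to pass from a possibly obstructed $f_{0}$ to a primitive $f$; once $\gcd(f(\mathbb{Z}^{\oplus 2}))=1$ holds they play no further role.

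\emph{Arithmetic model.} Let $\theta$ be a root of the irreducible cubic $f(t,1)$, let $K=\mathbb{Q}(\theta)$ (of signature $(3,0)$ or $(1,1)$), write $a_{0}$ for the leading coefficient of $f$, and set $\omega=a_{0}\theta\in\mathcal{O}_{K}$. Then $f(a,b)=a_{0}^{-2}N_{K/\mathbb{Q}}(a_{0}a-b\omega)$, so the values of $f$ are, up to the fixed rational factor $a_{0}^{-2}$, the norms of elements of the rank-$2$ lattice $\Lambda_{f}=a_{0}\mathbb{Z}\oplus\omega\mathbb{Z}\subset\mathbb{Z}[\omega]\subseteq\mathcal{O}_{K}$ (whose conductor inside $\mathcal{O}_{K}$ must be tracked). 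Hence a prime $p$ is represented by $f$ exactly when some degree-one prime ideal $\mathfrak{p}\mid p$ of $K$ lies in the right ideal class and possesses a generator landing in the appropriate coset of $\Lambda_{f}$; the counting problem thereby becomes one about generators of ideals constrained to a box at the archimedean places.

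\emph{Sieve decomposition.} Apply a combinatorial identity of Vaughan/Heath-Brown type to $\Lambda(f(a,b))$ and split the sum into \emph{Type I} (linear) sums $\sum_{d\leq D}\lambda_{d}\,\#\{(a,b)\in R(P):d\mid f(a,b)\}$ and \emph{Type II} (bilinear) sums $\sum_{m}\sum_{n}\alpha_{m}\beta_{n}\,\#\{(a,b)\in R(P):f(a,b)=mn\}$ with $m\asymp M$, $n\asymp N$, the ranges being dictated by the available level of distribution. The Type I sums are treated by the geometry of numbers: $\{(a,b):d\mid f(a,b)\}$ is a union of $\rho_{f}(d)$ cosets of an index-$d$ sublattice of $\mathbb{Z}^{2}$, where $\rho_{f}$ is multiplicative with $\rho_{f}(\ell)=O(\ell)$ and $\rho_{f}(\ell^{k})$ bounded divisor-fashion, and counting lattice points of $R(P)$ coset by coset yields a main term matching $\mathfrak{S}(f)\,|R(P)|$ plus an error $O(P/d+\rho_{f}(d))$, acceptable after summing over $d$ up to a level $D=P^{\vartheta}$ with a suitable $\vartheta<1$, provided the remaining cancellation is supplied by the Type II sums.

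\emph{Type II: the main obstacle.} The bilinear sums are the crux, as in \cite{HBM2004}. Via the arithmetic model, $\#\{(a,b)\in R(P):f(a,b)=mn\}$ counts factorisations $\mathfrak{a}\mathfrak{b}=(a_{0}a-b\omega)$ with $N\mathfrak{a}\asymp M$, $N\mathfrak{b}\asymp N$ and $a_{0}a-b\omega\in\Lambda_{f}\cap(\text{box})$. The decisive point is that a generator of a given ideal is unique up to units, and that requiring the generator to lie in a fixed box at \emph{every} archimedean place (all of its absolute values are then $\asymp P$, their product being the norm $\asymp P^{3}$) confines the admissible unit to a bounded set, because the logarithmic embedding of the unit group is a discrete lattice. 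So, up to a bounded factor and negligible diagonal terms, the count is a genuine bilinear form $\sum_{\mathfrak{a}}\sum_{\mathfrak{b}}\alpha(\mathfrak{a})\beta(\mathfrak{b})\,w(\mathfrak{a}\mathfrak{b})$ in the ideals, and cancellation is extracted from it by Cauchy--Schwarz followed by square-root cancellation for the resulting exponential sums over $\mathcal{O}_{K}$ (Weyl differencing combined with Deligne's bounds), which works when both $M$ and $N$ lie in a middle range bounded away from $1$ and from $P^{3}$. The real difficulty --- and essentially the entire content of \cite{HBM2004} --- is to make this Type II range meet the complement of the Type I range: for cubic forms the level of distribution afforded by the lattice-point and exponential-sum estimates is only barely enough to cover every term produced by the combinatorial identity, so those estimates must be sharp and uniform; and in the general, non-monic setting over an arbitrary cubic field one must in addition keep control of the conductor of $\mathbb{Z}[\omega]$, the class group of $K$, and a unit group of rank up to $2$, which is where most of the bookkeeping goes. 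Granting the Type II estimate, summing over the decomposition gives the displayed lower bound, and the theorem follows.
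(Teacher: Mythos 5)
The paper itself offers no proof of this statement: it is imported verbatim as Theorem~1 of Heath-Brown and Moroz \cite{HBM2004} and used as a black box, so there is nothing internal to compare your argument against. Judged on its own terms, your submission is an outline of that external proof rather than a proof. The framing is broadly right --- reduction of $\gcd(f(\mathbb{Z}^{\oplus 2}))=1$ to positivity of the local densities, the passage to norms of elements of a rank-two sublattice of an order in the cubic field $K=\mathbb{Q}(\theta)$, the confinement of the unit ambiguity to a bounded set via the logarithmic embedding, and a Type~I/Type~II decomposition with Type~I handled by lattice-point counting. But the decisive step is explicitly waived: you write ``granting the Type II estimate \dots the theorem follows,'' after having yourself identified that estimate as ``essentially the entire content'' of \cite{HBM2004}. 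Assuming the one ingredient that constitutes the theorem is a genuine gap, not a compression.

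Moreover, the one sentence you do devote to the bilinear sums misdescribes how the cancellation is actually obtained there: in Heath-Brown's $x^{3}+2y^{3}$ paper and in Heath-Brown--Moroz the Type~II bound does not come from ``Weyl differencing combined with Deligne's bounds,'' but from Cauchy--Schwarz followed by counting coincidences $\mathfrak{a}_{1}\mathfrak{b}_{1}=\mathfrak{a}_{2}\mathfrak{b}_{2}$ of ideals constrained to boxes (so that the diagonal dominates), together with a fundamental-lemma sieve to remove small prime factors and careful uniformity in the class group, units, and the conductor of $\mathbb{Z}[\omega]$. So even as a roadmap the sketch points at the wrong mechanism at the crucial juncture. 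For the purposes of this paper the correct move is the one the authors make: cite \cite{HBM2004} and verify its hypothesis $\gcd(f(\mathbb{Z}^{\oplus 2}))=1$ for the specific shifted forms used (which is the only part of \cref{HBM} that the paper actually has to check); if instead you intend to prove the theorem, the full bilinear-sum analysis must be supplied, not granted.
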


In \S2, we give a recipe
which exhibits how to construct counterexamples to the local-global principle as in \cref{equation}
from certain Fermat type equations $x^{3}+P^{\iota}y^{3} = Lz^{n}$ 
and prime numbers of the form $P^{\iota}b^{3}+c^{3}$.
Thanks to \cref{HBM},
both of the above Fermat type equations $x^{3}+P^{\iota}y^{3} = Lz^{n}$ 
and prime numbers of the form $P^{\iota}b^{3}+c^{3}$ are generated in completely explicit manners in \S3 and \S4 respectively.
In \S4, the proof of \cref{main_odd} is given by combining these arithmetic objects
with a geometric argument (\cref{infinitude}) on the non-isomorphy
of complex algebraic curves defined by \cref{equation}.
It should be emphasized that
the infinitude in \cref{main_odd} is a striking advantage of our construction
based on analytic number theory and complex algebraic geometry,
which is contrast to those based on algebraic and computational tools in e.g. \cite{Cohen,Fujiwara,Fujiwara-Sudo}.
In \S5, we demonstrate how our construction works for each given degree
by exhibiting concrete examples of degree $7, 9, 11$.
In Appendix, we explain how we can verify \cref{AACM_cubic} numerically 
for each given prime number $p$ by Magma \cite{Magma}.

It is fair to say that
thanks to \cref{HBM} (and \cref{class_number}),
which is one of the culminations of highly sophisticated modern analytic number theory,
our proof of \cref{main_odd} is relatively elementary
and almost covered by a standard first course of algebraic number theory
(as in e.g. \cite[Part I]{Cohen}, \cite[Ch.\  I -- Ch.\  V]{Hecke}, \cite[Ch.\  I -- Ch.\  V]{Marcus}, and \cite{Samuel}).
Moreover, after we admit \cref{main_odd},
it is quite easy to generate as many as we want explicit counterexamples like \cref{Selmer,Fujiwara_example}.

We would like to conclude this introduction with a comment on the style of our proof.
Each step of our proof
(e.g. \cref{local_solubility,global_unsolubility,Fermat})
can be easily refined to more powerful forms.
\footnote{
For example,
if we make effort for 3-adic and $p$-adic solubility in the proof of \cref{local_solubility},
then we can relax the conditions on the coefficients
and generalize our construction so that it includes the constructions of \cite{Fujiwara,Fujiwara-Sudo}.
Moreover, the use of \cref{HBM} in the  proof of \cref{Fermat} is not essential.
In fact, we can prove that a positive proportion of prime numbers $l$ can be used
to generate the equations in \cref{Fermat}
by using class field theory (cf.\ \cite{Tate_GCFT,Lv-Deng}).
}
However, in order to state and prove them in full generality,
we need the several times as much as the present volume.
In order to keep this paper as readable as possible,
we state each proposition in a restricted form
that is sufficient to prove \cref{main_odd}
with complete description of how to generate parameters in \cref{equation}.

\section{Construction from prime numbers and Fermat type equations}

Let $p$ be an odd prime number, $P = p \ \text{or} \ 2p$, and $\iota = 1 \ \text{or} \ 2$.
Here, we can take $P$ and $\iota$ independently of $p$.
In this section, we prove the following proposition,
which gives explicit counterexamples to the local-global principle of degree $n$
under the assumption that we have
\begin{itemize}
\item
	sufficiently many prime numbers of the form $P^{\iota}b^{3}+c^{3}$ with $b, c \in \mathbb{Z}$ and
	
\item
	integers $L$ such that the equation $x^{3}+P^{\iota}y^{3} = Lz^{n}$ has a specific property.
\end{itemize}
In what follows,
for each prime number $l$,
$v_{l}(n)$ denotes the additive $l$-adic valuation of $n \in \mathbb{Z}$ normalized so that $v_{l}(l) = 1$.

\begin{proposition} \label{recipe_odd}
Let $n \in \mathbb{Z}_{\geq 5}$ be an odd integer,
$p$ be a prime number,
$P = p \ \text{or} \ 2p$,
$\iota = 1 \ \text{or} \ 2$,
and $b_{1}, \dots, b_{\frac{n-3}{2}}, c_{1}, \dots, c_{\frac{n-3}{2}}, L \in \mathbb{Z}$ satisfying the following conditions:
\begin{enumerate}
\item
$P^{\iota}b_{j}^{3}+c_{j}^{3} \equiv 2 \pmod{3}$ is a prime number prime to $P$ for every $j$.

\item
$L = \prod_{l \equiv 2 \pmod{3}} l^{v_{l}(L)}$ with $v_{l}(L) < n$.

\item
$\gcd(L, b_{j}c_{j}) = 1$ for every $j$.

\item
If $P \equiv 0 \pmod{2}$,
then $L \equiv \prod_{j} b_{j}^{2} \equiv 1 \pmod{2}$

\item
If $P \not\equiv \pm1 \pmod{9}$,
then $L \equiv \prod_{j} b_{j}^{2} \not\equiv 0 \pmod{3}$
and $\sum_{j} b_{j}^{-1}c_{j} \not\equiv 0 \pmod{3}$.

\item
If $p \equiv 2 \pmod{3}$,
then $L \equiv \prod_{j} b_{j}^{2} \not\equiv 0 \pmod{p}$
and $\sum_{j} b_{j}^{-1}c_{j} \not\equiv 0 \pmod{p}$.

\item
For every primitive triple $(x, y, z)  \in \mathbb{Z}^{\oplus 3}$ 
\footnote{
In this article,
we say that a triple $(x, y, z)  \in \mathbb{Z}^{\oplus 3}$ is primitive
if $\gcd(x, y, z) = 1$.
}
satisfying $x^{3}+P^{\iota}y^{3} = Lz^{n}$,
there exists a prime divisor $l$ of $L$ such that $x \equiv y \equiv 0 \pmod{l}$.
\end{enumerate}
Then, the equation 
\[
	(X^{3}+P^{\iota}Y^{3})\prod_{j = 1}^{\frac{n-3}{2}}(b_{j}^{2}X^{2}+b_{j}c_{j}XY+c_{j}^{2}Y^{2}) = LZ^{n}
\]
violates the local-global principle.
\end{proposition}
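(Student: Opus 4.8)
The plan is to prove directly that the projective plane curve $C$ given by the displayed equation has a point over $\mathbb{R}$ and over $\mathbb{Q}_{\ell}$ for every prime $\ell$, but none over $\mathbb{Q}$. Throughout put $m:=(n-3)/2$, $G_{0}:=X^{3}+P^{\iota}Y^{3}$, $G_{j}:=b_{j}^{2}X^{2}+b_{j}c_{j}XY+c_{j}^{2}Y^{2}$ for $1\le j\le m$, $F:=G_{0}\prod_{j=1}^{m}G_{j}$, and $q_{j}:=P^{\iota}b_{j}^{3}+c_{j}^{3}$. Two structural facts drive the argument. First, writing $\omega$ for a primitive cube root of unity, $G_{j}(x,y)=N_{\mathbb{Q}(\omega)/\mathbb{Q}}(b_{j}x-\omega c_{j}y)$: thus $G_{j}$ is a value of the norm form of the principal ideal domain $\mathbb{Z}[\omega]$, in which a rational prime is inert exactly when it is $\equiv 2\pmod 3$, so that such a prime divides a nonzero norm only to an even multiplicity and only when it divides the element itself. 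Second, $\mathrm{Res}(G_{0},G_{j})=q_{j}^{2}$, because the zeros of $G_{j}$ are $\omega^{\pm 1}c_{j}/b_{j}$ and $G_{0}(\omega c_{j},b_{j})=c_{j}^{3}+P^{\iota}b_{j}^{3}=q_{j}$; there is a similar closed expression for $\mathrm{Res}(G_{i},G_{j})$ with $0\le i<j\le m$. Note also that each $G_{j}$ is positive definite ($-3b_{j}^{2}c_{j}^{2}<0$) and $G_{0}$ has no nonzero rational zero, as $P^{\iota}$ is never a perfect cube.

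For global insolubility, let $(x,y,z)\in\mathbb{Z}^{\oplus 3}$ be a primitive solution of $F(x,y)=Lz^{n}$. Since the $G_{j}$ are positive definite and $G_{0}$ has no nonzero rational zero, $F(x,y)=0$ forces $(x,y)=(0,0)$; hence $z\ne 0$. If $d:=\gcd(x,y)$, then $d^{n}\mid F(x,y)=Lz^{n}$ while $\gcd(d,z)=1$, so $d^{n}\mid L$, and condition~(2) forces $d=1$. Any prime $\ell\equiv 2\pmod 3$ dividing some $G_{j}(x,y)$ is inert in $\mathbb{Z}[\omega]$, hence divides $b_{j}x$ and $c_{j}y$; with $\gcd(x,y)=\gcd(b_{j},c_{j})=1$ this forces $\ell\mid b_{j}c_{j}$, so by conditions~(2) and~(3) no prime divisor of $L$ divides any $G_{j}(x,y)$. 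Therefore $v_{\ell}(G_{0}(x,y))=v_{\ell}(Lz^{n})\ge v_{\ell}(L)$ for every $\ell\mid L$, i.e.\ $L\mid G_{0}(x,y)$, and with $m_{0}:=G_{0}(x,y)/L\in\mathbb{Z}$ the equation becomes $m_{0}\prod_{j=1}^{m}G_{j}(x,y)=z^{n}$. By the resultant identities the integers $G_{0}(x,y),G_{1}(x,y),\dots,G_{m}(x,y)$ are pairwise coprime away from a fixed finite set $S$ of primes (those dividing $L$, the $q_{j}$, or the pairwise resultants, together with $2,3,p$). For $\ell\notin S$, pairwise coprimality and $m_{0}\prod_{j}G_{j}(x,y)=z^{n}$ give $n\mid v_{\ell}(G_{j}(x,y))$ for every $j$; for $\ell\in S$ one verifies $n\mid v_{\ell}\bigl(\prod_{j}G_{j}(x,y)\bigr)$ as well, using the inert-prime observation (which, through condition~(3), again settles $\ell\mid L$ and the divisors of the $q_{j}$ and resultants that are $\equiv 2\pmod 3$) together with conditions~(1) and~(4)--(6) for $\ell\in\{2,3,p\}$ and for the primes $\equiv 1\pmod 3$. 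Hence $\prod_{j=1}^{m}G_{j}(x,y)=w^{n}$ for some $w\in\mathbb{Z}_{>0}$, so $z^{n}=m_{0}w^{n}$, whence $m_{0}=(z/w)^{n}$ is a perfect $n$-th power and $x^{3}+P^{\iota}y^{3}=G_{0}(x,y)=Lm_{0}=L(z/w)^{n}$ with $z/w\in\mathbb{Z}$. Thus $(x,y,z/w)$ is a primitive solution of $x^{3}+P^{\iota}y^{3}=Lz^{n}$ with $\gcd(x,y)=1$, contradicting condition~(7). Hence $C(\mathbb{Q})=\emptyset$.

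For local solubility, setting $Y=0$ gives $\bigl(\prod_{j}b_{j}^{2}\bigr)X^{n}=LZ^{n}$, which is solvable over $\mathbb{R}$ since $n$ is odd and $\prod_{j}b_{j}^{2}/L>0$. For a prime $\ell$ dividing none of $6PL$, the $b_{j}c_{j}$, the $q_{j}$, or the resultants $\mathrm{Res}(G_{i},G_{j})$ ($0\le i<j\le m$), I exhibit a smooth $\mathbb{F}_{\ell}$-point of $C$ on the line $Z=0$ and lift it by Hensel's lemma: if $\ell\equiv 2\pmod 3$ then cubing is bijective on $\mathbb{F}_{\ell}$, so $a^{3}=-P^{\iota}$ has a solution $a\in\mathbb{F}_{\ell}$, the point $(a:1:0)$ lies on $C$, and the Jacobian of $F-LZ^{n}$ there equals $\prod_{j}G_{j}(a,1)\cdot(3a^{2},3P^{\iota},0)$, which is nonzero once $\ell\nmid 3P\prod_{j}q_{j}$ (since $G_{j}(a,1)=0\iff\ell\mid q_{j}$); if $\ell\equiv 1\pmod 3$ then some conic $G_{j}$ with $\ell\nmid b_{j}c_{j}$ has an $\mathbb{F}_{\ell}$-zero, and the resulting point of $C$ on $Z=0$ is smooth because $G_{j}$ is a smooth conic and, by the resultant hypothesis, no other factor of $F$ vanishes there. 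For each of the finitely many remaining primes (among $2,3,p$, the divisors of $L$, and the $q_{j}$) one finds a $\mathbb{Q}_{\ell}$-point either at infinity, when a smooth $\mathbb{F}_{\ell}$-point on $Z=0$ survives, or in the affine chart near $(1:0:1)$: conditions~(1) and~(4)--(6) pin down $F(1,0)/L=\prod_{j}b_{j}^{2}/L\equiv 1\pmod\ell$, and since $\sum_{j}b_{j}^{-1}c_{j}\not\equiv 0\pmod\ell$ one may perturb $Y$ within $\ell\mathbb{Z}_{\ell}$ so as to move $F(1,Y)/L$ into $(\mathbb{Q}_{\ell}^{\times})^{n}$.

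The main obstacle is the prime-by-prime bookkeeping inside the global step --- verifying, after the generic primes have been removed, that the multiplicities of $2$, $3$, $p$ and of the $q_{j}$ in $\prod_{j}G_{j}(x,y)$ remain divisible by $n$, which is exactly what conditions~(1) and~(3)--(6) are designed to enforce --- and, dually, the explicit production of $\mathbb{Q}_{\ell}$-points for $\ell=2,3,p$ (and $\ell\mid L$, $\ell=q_{j}$), where the only $\mathbb{F}_{\ell}$-points on the line at infinity may be singular; everything else is elementary.
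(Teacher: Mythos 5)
Your global and local halves both have the right skeleton, but each leaves its decisive step unproved, and in both places you lean on conditions (4)--(6), which cannot do the work you assign them: those are congruences on $L,b_j,c_j$ modulo $2,3,p$ only, used (in the paper) solely for local solubility at $2,3,p$; they say nothing about a hypothetical rational solution $(x,y,z)$ and nothing modulo any other prime. Concretely, in the global step the whole issue is a prime $q$ dividing both $G_0(x,y)$ and some $G_j(x,y)$ (your set $S$ minus the divisors of $L$, which you do handle correctly via the inert-prime observation). You assert ``one verifies $n\mid v_q(\prod_j G_j(x,y))$'' there, but give no argument, and your closing paragraph concedes this is the main obstacle. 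What is actually true --- and what the paper proves --- is that no such $q$ exists: from $b_j^{3}(x^{3}+P^{\iota}y^{3})-(b_jx-c_jy)\,G_j(x,y)=q_jy^{3}$ (your resultant identity $\mathrm{Res}(G_0,G_j)=q_j^{2}$) together with $\gcd(x,y)=1$ one gets $q\mid q_j$, hence $q=q_j$ by primality; then $q_j\equiv 2\pmod 3$ makes $q_j$ inert, forcing $q_j\mid c_j$ (or $b_j$), and $q_j=P^{\iota}b_j^{3}+c_j^{3}$ then forces $q_j\mid P$, contradicting condition (1). Until you carry out this case (or an equivalent), the reduction of $\prod_jG_j(x,y)$ to an $n$-th power, and hence the contradiction with condition (7), is not established.

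The local half has a parallel gap. Your smooth-point-plus-Hensel device at infinity only covers primes outside your exceptional set, and your proposed repair at the exceptional primes fails outside $\{2,3,p\}$: for $\ell\mid L$ with $\ell\neq 2,3,p$, or $\ell\mid q_j$, or $\ell\mid b_jc_j$, conditions (4)--(6) give no congruence mod $\ell$ at all (indeed for $\ell\mid L$ the quantity $\prod_jb_j^{2}/L$ is not even an $\ell$-adic unit, so ``$\equiv 1\pmod\ell$'' is meaningless), and the ``perturb $Y$'' step is unsubstantiated. Moreover at $\ell\in\{2,3,p\}$ your argument covers only the cases where the hypotheses of (4)--(6) hold; when $P$ is odd, or $P\equiv\pm1\pmod 9$, or $p\equiv1\pmod 3$, one must instead use that $G_0$ (resp.\ $G_j$) splits over $\mathbb{Q}_\ell$. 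The fix for all $\ell\neq 2,3,p$ is to produce exact points at infinity rather than lift smooth points of the curve: if $\ell\equiv1\pmod3$ then $\zeta_3\in\mathbb{Q}_\ell$ and $G_1$ has a nontrivial $\mathbb{Q}_\ell$-zero; if $\ell\equiv2\pmod3$ and $\ell\nmid 3P$, Hensel applied to the one-variable cubic $T^{3}+P^{\iota}$ (whose derivative is a unit at the mod-$\ell$ root) gives a $\mathbb{Q}_\ell$-zero of $G_0$; no smoothness of the curve mod $\ell$ is needed, so divisors of $L$, $q_j$, $b_jc_j$ and resultants cause no trouble. This is exactly the paper's observation that the splitting field of $G_0G_1$ is an $S_3$-extension unramified outside $2,3,p$, so $G_0G_1$ has a linear factor over $\mathbb{Q}_\ell$ for every $\ell\neq2,3,p$, with Hensel at $(1,0,1)$ and conditions (4)--(6) reserved for the remaining primes $2,3,p$.
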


This is a consequence of the following two lemmas.

\begin{lemma} [local solubility] \label{local_solubility}
Let $n \in \mathbb{Z}_{\geq 5}$ be an odd integer,
$p$ be a prime number,
$P = p \ \text{or} \ 2p$,
$\iota = 1 \ \text{or} \ 2$,
and $b_{1}, \dots, b_{\frac{n-3}{2}}, c_{1}, \dots, c_{\frac{n-3}{2}}, L \in \mathbb{Z}$ satisfying the following conditions:
\begin{enumerate}
\item
If $P \equiv 0 \pmod{2}$,
then $L \equiv \prod_{j} b_{j}^{2} \equiv 1 \pmod{2}$

\item
If $P \not\equiv \pm1 \pmod{9}$,
$L \equiv \prod_{j} b_{j}^{2} \not\equiv 0 \pmod{3}$
and $\sum_{j} b_{j}^{-1}c_{j} \not\equiv 0 \pmod{3}$.

\item
If $p \equiv 2 \pmod{3}$,
$L \equiv \prod_{j} b_{j}^{2} \not\equiv 0 \pmod{p}$
and $\sum_{j} b_{j}^{-1}c_{j} \not\equiv 0 \pmod{p}$.
\end{enumerate}
Then, the equation 
\[
	F(X, Y, Z)
	:= (X^{3}+P^{\iota}Y^{3})\prod_{j = 1}^{\frac{n-3}{2}}(b_{j}^{2}X^{2}+b_{j}c_{j}XY+c_{j}^{2}Y^{2}) - LZ^{n}
	= 0
\]
has non-trivial solutions over $\mathbb{R}$ and $\mathbb{Q}_{l}$ for every prime number $l$.
\end{lemma}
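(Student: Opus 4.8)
The plan is to establish local solubility at every place separately, treating the ``generic'' primes first and then the finitely many bad primes $\{2,3,p\}$ (and $\infty$) by hand. First I would observe that over $\mathbb{R}$ the equation is trivially soluble: the quadratic forms $b_j^2X^2+b_jc_jXY+c_j^2Y^2$ are positive (semi-)definite, $X^3+P^\iota Y^3$ takes all real values, and since $n$ is odd $LZ^n$ also takes all real values, so one can first fix $X,Y$ making the left-hand side positive and then solve for $Z$. For a prime $l \nmid nL \cdot P \cdot \prod_j(\text{disc})$, I would use a counting/Hensel argument: the affine curve $F=0$ is smooth over $\mathbb{F}_l$ away from a controlled bad locus, so by Hasse--Weil it has an $\mathbb{F}_l$-point for $l$ large, and for the remaining small $l$ one still gets a point with $Z\not\equiv 0$ or a nonsingular point by a direct search; Hensel's lemma then lifts it to $\mathbb{Q}_l$. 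Actually a cleaner route that avoids Hasse--Weil: for any $l\nmid nL$, pick $Z$ a unit, note $x\mapsto x^n$ is surjective on $\mathbb{F}_l^\times$ when $\gcd(n,l-1)=1$ and otherwise choose $X,Y$ so the left side lies in the image; since the left side as a function of $(X,Y)$ hits many residues, a pigeonhole plus Hensel finishes it. I would present whichever of these is shortest.

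The substance is the bad primes. For $l=p$ (equivalently the primes dividing $P$): here $v_p(L)=0$ by condition (3) combined with the hypothesis $\gcd(L,b_jc_j)=1$ — more precisely I would set $Y\equiv 0$, so the equation becomes $X^3\prod_j b_j^2X^2 = LZ^n$, i.e. $(\prod_j b_j^2)X^n = LZ^n$; since $\prod_j b_j^2$ and $L$ are $p$-adic units this is $X^n/Z^n = L/\prod b_j^2 \in \mathbb{Z}_p^\times$, solvable with $X,Z$ units provided this unit is an $n$-th power in $\mathbb{Z}_p^\times$. When $p\equiv 1\pmod 3$ the hypotheses don't force this, so instead I would keep $Y$ free: the left-hand side is $(X^3+P^\iota Y^3)\prod_j b_j^2(X^2+b_j^{-1}c_jXY+b_j^{-2}c_j^2Y^2)$, and modulo $p$ this is $\prod_j b_j^2 \cdot X \cdot \prod_j(X^2+b_j^{-1}c_jXY+b_j^{-2}c_j^2Y^2)$ — a product of a linear and quadratic factors in $(X:Y)$ over $\mathbb{F}_p$. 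The condition $\sum_j b_j^{-1}c_j\not\equiv 0$ in (3) is exactly what guarantees that this degree-$n$ form in $(X,Y)$ is not identically a perfect $n$-th power times a constant, so it represents a nonzero $n$-th-power class mod $p$ for some $(X:Y)$; combined with $Z$ a unit and Hensel we are done. For $l=3$ under $P\not\equiv\pm1\pmod 9$: the analogous mod-$3$ analysis uses conditions (2) and (5), where the key point is that $X^3+P^\iota Y^3 \equiv (X+P^\iota Y)^3 \pmod 3$ is itself a cube, so the whole left side is $(\text{cube})\cdot\prod_j(\text{quadratic})$, and $\sum b_j^{-1}c_j\not\equiv 0\pmod 3$ keeps the quadratic part from collapsing; again Hensel lifts (here one must check $3\nmid n$ is \emph{not} assumed, so I'd use that $L$ is a product of primes $\equiv2\pmod 3$ hence $v_3(L)=0$, and lift carefully since $3\mid n$ is possible — a higher-order Hensel or an explicit $3$-adic construction). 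For $l=2$ when $P$ is even: condition (4) gives $L,\prod b_j^2$ odd, and setting $Y=0$ reduces to $(\prod b_j^2)X^n=LZ^n$ over $\mathbb{Z}_2$; since $n$ is odd, $x\mapsto x^n$ is a bijection on $\mathbb{Z}_2^\times$, so this is immediately solvable.

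The main obstacle I expect is the prime $p$ in the case $p\equiv1\pmod3$ (and the structurally parallel issue at $3$ and $2$ being handled by the oddness of $n$), because there $n$-th powers in $\mathbb{Q}_p^\times$ form a proper subgroup of index $\gcd(n,p-1)$ which can be large, so one genuinely needs the factorized shape of the left-hand side and the nonvanishing conditions (2),(3) on $\sum_j b_j^{-1}c_j$ to produce a representable $n$-th-power class; a naive substitution like $Y=0$ is insufficient. A secondary technical nuisance is that $3\mid n$ or $p\mid n$ is allowed, so Hensel's lemma in its crude form (nonzero derivative mod $l$) may fail at $l\in\{3,p\}$ and must be replaced either by the strong form ($|f(a)|_l<|f'(a)|_l^2$) after choosing the approximate solution to sufficiently high precision, or by directly exhibiting a $\mathbb{Z}_l$-point using the explicit product structure; I would organize the argument so that for each of $l=2,3,p$ one first finds a smooth $\mathbb{F}_l$-point of the affine surface $F=0$ in suitable coordinates (which the stated congruence conditions are precisely designed to guarantee) and then invokes Hensel once.
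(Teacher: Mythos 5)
There is a genuine gap, and it lies exactly where your proposal is vaguest: the primes $l\notin\{2,3,p\}$ and the ``no-hypothesis'' subcases at $2$, $3$, $p$. The paper's proof never needs Hasse--Weil, point counting, or any statement about value sets of the degree-$n$ form: it observes that the splitting field of $(X^{3}+P^{\iota}Y^{3})(b_{1}^{2}X^{2}+b_{1}c_{1}XY+c_{1}^{2}Y^{2})$ is the $S_{3}$-extension $\mathbb{Q}(P^{1/3},\zeta_{3})$, unramified outside $2,3,p$, so at every other prime $l$ the decomposition group is cyclic of order $1$, $2$ or $3$; hence either the cubic factor or a quadratic factor acquires a \emph{linear} factor over $\mathbb{Q}_{l}$, and one simply takes $(X:Y)$ to be its zero and $Z=0$. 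The same trick disposes of the cases with vacuous hypotheses: for $P$ odd every $2$-adic unit is a cube, for $P\equiv\pm1\pmod 9$ the element $P^{\iota}$ is a cube in $\mathbb{Z}_{3}$, and for $p\equiv1\pmod 3$ one has $\zeta_{3}\in\mathbb{Q}_{p}$ so the quadratics split. Your substitutes do not close: the Hasse--Weil route leaves all primes $l$ below a bound growing with $n$ to an impossible ``direct search'' (and lifting also needs a smooth point, which the lemma's hypotheses do not guarantee since repeated or degenerate quadratic factors are allowed); and the pigeonhole claim --- that the left-hand form ``hits many residues'' and hence lands in the required coset $L\cdot(\mathbb{F}_{l}^{\times})^{n}$ --- is unjustified precisely when $\gcd(n,l-1)$ is large, and representing ``a nonzero $n$-th-power class'' is not the statement you need.

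Your treatment of $l=p$ with $p\equiv1\pmod 3$ is also broken on its own terms: condition (3) is only assumed when $p\equiv2\pmod 3$, so in that case you have no hypothesis $\sum_{j}b_{j}^{-1}c_{j}\not\equiv0$ to invoke, and the inference ``not a perfect $n$-th power, hence represents the right class'' is false in general; the correct argument is the linear-factor-plus-$Z=0$ one above. By contrast, your closing organizational remark does match the paper's treatment of the genuinely conditional cases ($P$ even at $l=2$, $P\not\equiv\pm1\pmod 9$ at $l=3$, $p\equiv2\pmod 3$ at $l=p$): there the paper takes the point $(1,0,1)$, where $F(1,0,1)=\prod_{j}b_{j}^{2}-L\equiv0$, and applies ordinary Hensel in the $Y$-direction (or the $Z$-direction at $l=2$), using $\partial F/\partial Y(1,0,1)=(\prod_{j}b_{j}^{2})(\sum_{j}b_{j}^{-1}c_{j})\not\equiv0$ to sidestep the $p\mid n$ and $3\mid n$ problem entirely; no strong Hensel or high-precision approximation is needed. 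So the conditional bad-prime cases are recoverable from your sketch, but the unconditional cases and all remaining primes require the splitting-field/linear-factor idea that the proposal is missing.
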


\begin{proof}
We prove this statement along Fujiwara's argument in \cite{Fujiwara}:
We may assume that $b_{1}, c_{1} \neq 0$
because if $b_{j} = c_{j} = 0$ for every $j$, there is nothing to prove.
Since $\iota = 1 \ \text{or} \ 2$,
the minimal splitting field of
$(X^{3}+P^{\iota}Y^{3})(b_{1}^{2}X^{2}+b_{1}c_{1}XY+c_{1}^{2}Y^{2})$
is a Galois extension over $\mathbb{Q}$
whose Galois group is isomorphic to the symmetric group of degree 3.
In particular, the residual degree at every prime number is 1, 2, or 3.
Moreover, since this extension is unramified outside 2, 3, and $p$ (and $\infty$),
the polynomial $(X^{3}+P^{\iota}Y^{3})(b_{1}^{2}X^{2}+b_{1}c_{1}XY+c_{1}^{2}Y^{2})$
has a linear factor over $\mathbb{Q}_{l}$ for every prime number $l \neq 2, 3, p$.
Therefore, the assertion follows if one notes the following facts:
\begin{enumerate}
\item
$X^{3}+P^{\iota}Y^{3}$ is decomposed in $\mathbb{R}[X, Y]$.

\item
If $P \equiv 1 \pmod{2}$, then $X^{3}+P^{\iota}Z^{3}$ is decomposed in $\mathbb{Z}_{2}[X, Y]$.
On the other hand, if $P \equiv 0 \pmod{2}$,
then since $F(1, 0, 1) \equiv \prod_{j} b_{j}^{2} - L \equiv 0 \pmod{2}$,
and $(\partial F/\partial Z)(1, 0, 1) = nL \not\equiv 0 \pmod{2}$,
we obtain a 2-adic lift of mod 2 solution $(1, 0, 1)$ by Hensel's lemma.
	
\item
If $P \equiv \pm 1 \pmod{9}$, then $X^{3}+P^{\iota}Z^{3}$ is decomposed in $\mathbb{Z}_{3}[X, Y]$.
On the other hand, if $P \not\equiv \pm 1 \pmod{9}$,
then since $F(1, 0, 1) \equiv \prod_{j} b_{j}^{2} - L \equiv 0 \pmod{3}$,
and $(\partial F/\partial Y)(1, 0, 1) \equiv (\prod_{j} b_{j}^{2}) \cdot (\sum_{j} b_{j}^{-1}c_{j}) \not\equiv 0 \pmod{3}$,
we obtain a 3-adic lift of mod 3 solution $(1, 0, 1)$ by Hensel's lemma.
	
\item
If $p \equiv 1 \pmod{3}$,
then $b_{j}^{2}X^{2}+b_{j}c_{j}XY+c_{j}^{2}Y^{2}$ is decomposed in $\mathbb{Z}_{p}[X, Y]$.
On the other hand, if $p \equiv 2 \pmod{3}$,
then since $F(1, 0, 1) \equiv \prod_{j} b_{j}^{2} - L \equiv 0 \pmod{p}$ and
$(\partial F/\partial Y)(1, 0, 1) \equiv (\prod_{j} b_{j}^{2}) \cdot (\sum_{j} b_{j}^{-1}c_{j}) \not\equiv 0 \pmod{p}$,
we obtain a $p$-adic lift of mod $p$ solution $(1, 0, 1)$ by Hensel's lemma.
\end{enumerate}
This completes the proof.
\end{proof}

\begin{lemma} [global unsolubility] \label{global_unsolubility}
Let $n \in \mathbb{Z}_{\geq 3}$ be an odd integer,
$a, b_{1}, ..., b_{\frac{n-3}{2}}, c_{1}, ..., c_{\frac{n-3}{2}}, L \in \mathbb{Z}$
such that
\begin{enumerate}
\item
$ab_{j}^{3}+c_{j}^{3} \equiv 2 \pmod{3}$ is a prime number prime to $a$ for every $j$,

\item
$L = \prod_{l \equiv 2 \pmod{3}} l^{v_{l}(L)}$ with $v_{l}(L) < n$,

\item
$\gcd(L, b_{j}c_{j}) = 1$ for every $j$, and

\item
for every primitive triple $(x, y, z)  \in \mathbb{Z}^{\oplus 3}$
\footnote{
In this article,
we say that a triple $(x, y, z)  \in \mathbb{Z}^{\oplus 3}$ is primitive
if $\gcd(x, y, z) = 1$.
}
satisfying $x^{3}+ay^{3} = Lz^{n}$,
there exists a prime divisor $l$ of $L$ such that $x \equiv y \equiv 0 \pmod{l}$.
\end{enumerate}
Then, there is no triple $(X, Y, Z) \in \mathbb{Z}^{\oplus 3} \setminus \{ (0, 0, 0) \}$ satisfying
\begin{equation} \label{condition}
	(X^{3}+aY^{3}) \prod_{j = 1}^{\frac{n-3}{2}} (b_{j}^{2}X^{2}+b_{j}c_{j}XY+c_{j}^{2}Y^{2}) = LZ^{n}.
\end{equation}
\end{lemma}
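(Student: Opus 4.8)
The plan is to argue by contradiction. Suppose some $(X,Y,Z) \in \mathbb{Z}^{\oplus 3} \setminus \{(0,0,0)\}$ satisfies \cref{condition}; after dividing out the common factor we may assume the triple is primitive. First I would record two consequences of the first hypothesis. Since $p_{j} := ab_{j}^{3} + c_{j}^{3}$ is a prime number prime to $a$, we have $b_{j} \neq 0$, $c_{j} \neq 0$, and $\gcd(b_{j}, c_{j}) = 1$ (a common prime divisor $d$ of $b_{j}$ and $c_{j}$ would force $d^{3} \mid p_{j}$). Consequently each quadratic form $Q_{j}(X,Y) := b_{j}^{2} X^{2} + b_{j} c_{j} XY + c_{j}^{2} Y^{2}$ has negative discriminant $-3 b_{j}^{2} c_{j}^{2}$, hence is positive definite, so $Q_{j}(X,Y) = 0$ only for $(X,Y) = (0,0)$. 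This already disposes of the case $Z = 0$: there primitivity gives $\gcd(X,Y) = 1$, vanishing of the left-hand side forces $X^{3} + aY^{3} = 0$ with $(X,Y) \neq (0,0)$, and then $X^{3} = -aY^{3}$ together with $\gcd(X,Y) = 1$ yields $Y = \pm 1$; but then $(X, Y, 0)$ is a primitive solution of $x^{3} + ay^{3} = Lz^{n}$, and the fourth hypothesis supplies a prime $l \mid L$ with $l \mid Y = \pm 1$, which is impossible. So I may assume $Z \neq 0$ henceforth.

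Next I would prove two coprimality statements on which the argument rests. (a) $\gcd(X,Y) = 1$: if a prime $q$ divided both $X$ and $Y$, then $q^{3} \mid X^{3} + aY^{3}$ and $q^{2} \mid Q_{j}(X,Y)$ for every $j$, so $q^{n} \mid LZ^{n}$; primitivity gives $q \nmid Z$, whence $q^{n} \mid L$, contradicting $v_{q}(L) < n$. (b) $\gcd(X^{3} + aY^{3},\, Q_{j}(X,Y)) = 1$ for every $j$: here I would use the identities
\[
	c_{j}^{3}(X^{3} + aY^{3}) + a(b_{j} X - c_{j} Y)Q_{j}(X,Y) = p_{j} X^{3}, \qquad
	b_{j}^{3}(X^{3} + aY^{3}) - (b_{j} X - c_{j} Y)Q_{j}(X,Y) = p_{j} Y^{3},
\]
both coming from $(b_{j}X - c_{j}Y)\,Q_{j}(X,Y) = b_{j}^{3}X^{3} - c_{j}^{3}Y^{3}$; together with $\gcd(X,Y) = 1$ they show that $\gcd(X^{3} + aY^{3},\, Q_{j}(X,Y))$ divides $\gcd(p_{j} X^{3}, p_{j} Y^{3}) = p_{j}$, so it suffices to check $p_{j} \nmid Q_{j}(X,Y)$. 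Since $\gcd(b_{j}, c_{j}) = 1$ and $p_{j} \mid ab_{j}^{3} + c_{j}^{3}$, we have $p_{j} \nmid b_{j} c_{j}$, and then $p_{j} \mid Q_{j}(X,Y)$ with $\gcd(X,Y) = 1$ would, on completing the square modulo $p_{j}$, make $-3$ a square modulo $p_{j}$, which is impossible because $p_{j} \equiv 2 \pmod{3}$. (The case $p_{j} = 2$ is excluded directly: the first hypothesis forces $a$ odd, hence $b_{j}$ and $c_{j}$ odd, hence $Q_{j}(X,Y)$ odd.)

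Now I would compare valuations in $LZ^{n} = (X^{3} + aY^{3})\prod_{j} Q_{j}(X,Y)$. Any prime $l \mid L$ is $\equiv 2 \pmod{3}$ (by the second hypothesis) and satisfies $l \nmid b_{j} c_{j}$ (by the third), so exactly as in (b) we get $l \nmid Q_{j}(X,Y)$ for all $j$. Hence $v_{l}(X^{3} + aY^{3}) = v_{l}(L) + n\, v_{l}(Z)$ for every $l \mid L$, and separating the $L$-part of $X^{3} + aY^{3}$ I may write $X^{3} + aY^{3} = L W^{n} K$ with $W := \prod_{l \mid L} l^{v_{l}(Z)}$ dividing $Z$ and $\gcd(K, L) = 1$. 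Substituting and cancelling $L$ yields $(Z/W)^{n} = K \prod_{j} Q_{j}(X,Y)$. Since $K \mid X^{3} + aY^{3}$, statement (b) gives $\gcd(K,\, Q_{j}(X,Y)) = 1$ for each $j$, hence $\gcd\bigl(K,\, \prod_{j} Q_{j}(X,Y)\bigr) = 1$; as the product of these two coprime integers is a perfect $n$-th power and $n$ is odd, each of them is a perfect $n$-th power, so $K = k_{0}^{n}$ for some $k_{0} \in \mathbb{Z}$. Then $X^{3} + aY^{3} = L(W k_{0})^{n}$, so $(X, Y, W k_{0})$ is a primitive solution of $x^{3} + ay^{3} = Lz^{n}$; the fourth hypothesis now produces a prime $l \mid L$ with $l \mid X$ and $l \mid Y$, contradicting $\gcd(X,Y) = 1$ and completing the proof.

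The step I expect to be the crux is statement (b): controlling $\gcd(X^{3} + aY^{3},\, Q_{j}(X,Y))$ by the two cubic identities so that it divides $p_{j}$, and then removing this last possible factor by using that $p_{j} \equiv 2 \pmod{3}$ makes $-3$ a non-square modulo $p_{j}$, so that $p_{j} \nmid Q_{j}(X,Y)$. The rest is routine bookkeeping with valuations and unique factorisation; I note in particular that one never needs the forms $Q_{j}(X,Y)$ to be pairwise coprime, only that $K$ is coprime to their product.
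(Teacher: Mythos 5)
Your proposal is correct and follows essentially the same route as the paper's proof: you show each quadratic factor $b_{j}^{2}X^{2}+b_{j}c_{j}XY+c_{j}^{2}Y^{2}$ is coprime to $(X^{3}+aY^{3})L$ via the identity $(b_{j}X-c_{j}Y)Q_{j}=b_{j}^{3}X^{3}-c_{j}^{3}Y^{3}$ together with the anisotropy of the form modulo primes $\equiv 2 \pmod{3}$, reduce to a primitive solution of $x^{3}+ay^{3}=Lz^{n}$, and invoke hypothesis (4) with $v_{l}(L)<n$. The only differences are bookkeeping ones (you treat $Z=0$, $\gcd(X,Y)=1$, and the extraction of the $n$-th power $K=k_{0}^{n}$ explicitly, where the paper is terser), not a different method.
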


\begin{proof}
We prove the assertion by contradiction.
Let $(X, Y, Z) \in \mathbb{Z}^{\oplus 3}$ be a triple satisfying \cref{condition}.
We may assume that it is primitive.
It is sufficient to deduce that
\begin{equation} \label{gcd_1}
	\gcd((X^{3}+aY^{3})L, b_{j}^{2}X^{2}+b_{j}c_{j}XY+c_{j}^{2}Y^{2}) = 1
	\quad \text{for every $j$}.
\end{equation}
Indeed, if \cref{gcd_1} holds,
we have some divisor $z$ of $Z$ satisfying $X^{3}+aY^{3}= Lz^{n}$.
Hence, the fourth assumption implies that
$X \equiv Y \equiv 0 \pmod{l}$ for some prime divisor $l$ of $L$.
However, since $v_{l}(L) < n$,
we have $Z \equiv 0 \pmod{l}$, which contradicts $\gcd(X, Y, Z) = 1$.
In what follows, we deduce \cref{gcd_1}.

First, suppose that
a prime divisor $q$ of $X^{3}+aY^{3}$ divides $b_{j}^{2}X^{2}+b_{j}c_{j}XY+c_{j}^{2}Y^{2}$ for some $j$.
Then, $q$ also divides
\[
	b_{j}^{3}(X^{3}+aY^{3}) - (b_{j}X-c_{j}Y)(b_{j}^{2}X^{2}+b_{j}c_{j}XY+c_{j}^{2}Y^{2})
	= (ab_{j}^{3}+c_{j}^{3})Y^{3}.
\]
Since $\gcd(X, Y, Z) = 1$ and $v_{q}(L) < n$,
we see that $Y \not\equiv 0 \pmod{q}$.
Hence, by the first assumption, we have $q = ab_{j}^{3}+c_{j}^{3} \equiv 2 \pmod{3}$.
In particular, the polynomial $b_{j}^{2}T^{2}+b_{j}c_{j}T+c_{j}^{2}$ is irreducible in $\mathbb{Z}_{q}[T]$.
Since $b_{j}^{2}X^{2}+b_{j}c_{j}XY+c_{j}^{2}Y^{2} \equiv 0 \pmod{q}$
and $Y \not\equiv 0 \pmod{q}$,
we have $c_{j} \equiv 0 \pmod{q}$.
However, $q = ab_{j}^{3}+c_{j}^{3}$ implies that $a$ must be divisible by $q$, a contradiction.

Secondly, suppose that a prime divisor $l \equiv 2 \pmod{3}$ of $L$ divides
$b_{j}^{2}X^{2}+b_{j}c_{j}XY+c_{j}^{2}Y^{2}$ for some $j$.
Then, since $T^{2}+T+1$ is irreducible in $\mathbb{F}_{l}[T]$,
we have $b_{j}X \equiv c_{j}Y \equiv 0 \pmod{l}$.
On the other hand,
since $\gcd(X, Y, Z) = 1$ and $v_{l}(L) < n$,
we see that $X \not\equiv 0 \pmod{l}$ or $Y \not\equiv 0 \pmod{l}$.
However, if $X \not\equiv 0 \pmod{l}$ (resp. $Y \not\equiv 0 \pmod{l}$),
then $b_{j} \equiv 0 \pmod{l}$ (resp. $c_{j} \equiv 0 \pmod{l}$),
which contradicts that $\gcd(L, b_{j}c_{j}) = 1$.

This completes the proof.
\end{proof}

\section{Fermat type equations of the form $X^{3}+p^{\iota}Y^{3} = LZ^{n}$}

In this section,
we take an odd prime number $p$ and set $P = 2p \ \text{or} \ p$ so that $P \not\equiv \pm 1 \bmod{9}$.
We fix them through the whole of this section.
Let $\pi = P^{1/3} \in \mathbb{R}$ be the real cubic root of $P$,
$K = \mathbb{Q}(\pi) \subset \mathbb{R}$,
and $\mathcal{O}_{K}$ denotes the ring of integers in $K$.
Here, note that since $P \not\equiv \pm1 \pmod{9}$,
we see that $\mathcal{O}_{K} = \mathbb{Z}[\pi]$.
Let $\epsilon = \alpha+\beta \pi+\gamma \pi^{2} > 1$ be the fundamental unit of $K$
with $\alpha, \beta, \gamma \in \mathbb{Z}$,
and $\mathfrak{p} = \mathfrak{p}_{p}$ be the unique prime ideal of $K$ above $p$.
Then, we see that $\mathfrak{p}^{3} = p\mathcal{O}_{K}$
and $\pi$ is a uniformizer of the $\mathfrak{p}$-adic completion of $\mathcal{O}_{K}$.
Note that the Galois closure of $K$ in $\mathbb{C}$ is $K(\zeta_{3})$,
where $\zeta_{3} \in \mathbb{C}$ is a fixed primitive cubic root of unity.
For basic properties of these objects, see \cite{Dedekind,Barrucand-Cohn} and their references.

Set
\[
	\iota =
	\begin{cases}
	1 & \text{if $\beta \not\equiv 0 \pmod{p}$ or $\beta \equiv \gamma \equiv 0 \pmod{p}$} \\
	2 & \text{if $\beta \equiv 0 \pmod{p}$ and $\gamma \not\equiv 0 \pmod{p}$}
	\end{cases}.
\]
For example,
if $P = 3 \ \text{or} \ 6$, then we have $(\alpha, \beta, \gamma) = (4, 3, 2) \ \text{or} \ (109, 60, 33)$,
hence $\iota = 2 \ \text{or} \ 1$ respectively.
On the other hand,
if \cref{AACM_cubic} holds for $p \geq 5$,
then we have $\iota = 1$ for $P = p \ \text{and} \ 2p$.

In this section, we prove the following theorem.

\begin{theorem} \label{Fermat}
Let $p \geq 3$ be a prime number,
and $n \in \mathbb{Z}_{\geq 5}$ divisible by $p^{\iota}$.
Then, there exist infinitely many prime numbers $l$
and an integer $m \in \{ 1, 2, ..., p-1 \}$ such that
\begin{enumerate}
\item
$l \equiv 2 \pmod{3}$ and $l^{m} \equiv 1 \pmod{3}$,

\item
$l^{m} \equiv 1 \pmod{p}$, and

\item
every primitive solution of $x^{3}+P^{\iota}y^{3} = l^{m}z^{n}$ satisfies $x \equiv y \equiv 0 \pmod{l}$.
\end{enumerate}
\end{theorem}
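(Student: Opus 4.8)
The plan is to produce the primes $l$ via the distribution result \cref{HBM}, by realizing $l^{m}$ — up to a cube — as a norm from $K = \mathbb{Q}(P^{1/3})$ in a way that forces any primitive solution $(x,y,z)$ of $x^{3}+P^{\iota}y^{3} = l^{m}z^{n}$ to be divisible by $\mathfrak{l}$, the (degree-one) prime of $K$ above $l$. First I would choose $m \in \{1,\dots,p-1\}$ to be the multiplicative order of a suitable residue class so that conditions (1) and (2) hold automatically: since $l \equiv 2 \pmod 3$ and $l \equiv$ (a prescribed residue) $\pmod p$, one picks $m$ to be the order of $l$ in $(\mathbb{Z}/3p\mathbb{Z})^{\times}$ restricted to the relevant cyclic piece, and the real content is to arrange that $l$ \emph{splits completely with a principal degree-one factor} in $K$ while behaving well at $3$ and $p$. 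Concretely I would look for $l$ of the form $N_{K/\mathbb{Q}}(x_0 + y_0\pi)= x_0^3 + P^{\iota}y_0^{3}$ for integers $x_0,y_0$ — this is exactly a binary cubic form $f_0(x,y)=x^3+P^{\iota}y^3$ — and apply \cref{HBM} after an affine substitution $x\mapsto \rho x+\gamma_1$, $y\mapsto \rho y+\gamma_2$ chosen so that (a) the gcd of the coefficients of the shifted form is $1$ (giving a primitive $f$ whose values are infinitely often prime), and (b) the congruence conditions $l\equiv 2\pmod 3$, $l\equiv$ the chosen class $\pmod p$, and $\gcd(x_0,y_0)=1$ are built into the residue classes of $x_0,y_0$. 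The substitution is the place where \cref{HBM} is tailored, so the bulk of the argument is checking that the prescribed residue conditions are simultaneously satisfiable and that the resulting $f$ has content $1$.

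Next I would handle the descent forcing condition (3). Given a primitive solution $x^{3}+P^{\iota}y^{3}=l^{m}z^{n}$, I pass to $\mathcal{O}_{K}=\mathbb{Z}[\pi]$ and factor the left-hand side as $(x+\iota'\pi y)\cdot(\dots)$ where the relevant factor generating $x+\pi y$ (or $x+\pi^2 y$ when $\iota=2$; this is precisely why the two cases of $\iota$ are separated, to keep $\pi^{\iota}y$ inside $\mathcal{O}_K$). Since $l^m$ equals a norm of a degree-one prime $\mathfrak{l}$ and $l\equiv 2\pmod 3$ (so the other prime of $\mathbb{Q}(\zeta_3)$ side is inert), the ideal $(x+\pi y)$ must be divisible by a power of $\mathfrak{l}$ congruent to $m \cdot v_{\mathfrak{l}}(\text{stuff}) \bmod n$; because $l^{m}\equiv 1 \pmod p$ and $n$ is divisible by $p^{\iota}$, and because $\epsilon$ is the fundamental unit with the $p$-adic property encoded in $\iota$, the unit ambiguity in ``$(x+\pi y) = (\text{unit})\cdot \mathfrak{l}^{m}\cdot(\text{cube-free part})^{?}$'' cannot absorb the discrepancy — forcing $v_{\mathfrak{l}}(x+\pi y)\geq 1$, hence $l\mid x$ and $l\mid y$. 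The key leverage is that raising $\epsilon$ to the power needed to clear the $\mathfrak{l}$-part changes the $\pi$-adic expansion by a factor controlled by $\beta \pmod p$, and $\iota$ is defined exactly so that the fundamental unit is ``$p$-adically trivial enough'' that no such clearing is possible modulo $n$.

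The main obstacle is step two: the unit-group bookkeeping over $\mathcal{O}_K$ to rule out the unit $\epsilon$ (and roots of unity, which here are only $\pm1$) from cancelling the $\mathfrak{l}$-valuation. This is where the hypothesis ``$n$ divisible by $p^{\iota}$'' and the definition of $\iota$ in terms of $\beta,\gamma \bmod p$ get used: one needs that $\epsilon^{k}\equiv 1$ in $(\mathcal{O}_K/\mathfrak{p}^{?})^{\times}$ only for $k$ divisible by a large enough power of $p$, so that a would-be solution with $l\nmid x$ produces an $n$-th power identity in a finite quotient that is incompatible with $p^{\iota}\mid n$. I expect the class-number input (referenced as \cref{class_number} in the introduction) enters to guarantee $\mathfrak{l}$ is principal, or to control the class group so the ideal equation descends to an element equation up to the unit $\epsilon$. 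Once (3) is in place, conditions (1) and (2) are immediate from the congruence design of the \cref{HBM} substitution, and the infinitude of such $l$ is exactly the conclusion of \cref{HBM}.
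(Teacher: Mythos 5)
Your skeleton (HBM applied to a norm form with congruences built into the substitution, factorization of $x^{3}+P^{\iota}y^{3}$ in $\mathcal{O}_{K}=\mathbb{Z}[\pi]$, class-number input to descend from ideals to elements, then unit bookkeeping at the prime $\mathfrak{p}$ above $p$) matches the paper's route, but the step that actually carries the proof is missing, and the step you do describe is logically off. First, the conclusion you aim for in the descent is wrong: once $l^{m}\mid x^{3}+P^{\iota}y^{3}$, divisibility of $x+y\pi^{\iota}$ by the degree-one prime $\mathfrak{p}_{l}$ is automatic, and $v_{\mathfrak{p}_{l}}(x+y\pi^{\iota})\geq 1$ does \emph{not} give $l\mid x$ and $l\mid y$ (that would require divisibility by $l$ itself, i.e.\ also by $\mathfrak{p}_{l^{2}}$). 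What the paper proves (\cref{Fujiwara}) is a contradiction: assuming a primitive solution with $l\nmid\gcd(x,y)$, one gets $(x+y\pi^{\iota})=\mathfrak{p}_{l}^{m}\mathfrak{w}^{n}$, hence by principality of $\mathfrak{p}_{l}$ (it is generated by the element whose norm is $l$, not by a class-number argument) and by \cref{class_number} an identity $x+y\pi^{\iota}=\pm\epsilon^{k}\lambda^{m}w^{p^{\iota}}$; comparing $\pi$-adic expansions modulo $\mathfrak{p}^{2\iota+1}$, the left side has no $\pi^{2\iota}$-term while the right side's $\pi^{2\iota}$-coefficient is $C_{k}w_{0}^{n}$ with $w_{0}\not\equiv 0\pmod p$, so one needs $C_{k}\not\equiv 0\pmod p$ \emph{for every} $k\in\mathbb{Z}$ to reach the contradiction. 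Your substitute for this ("$\epsilon^{k}\equiv 1$ in $(\mathcal{O}_{K}/\mathfrak{p}^{?})^{\times}$ only for $k$ divisible by a large power of $p$") is not the relevant condition and cannot by itself rule out the unit ambiguity.

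Second, you assign the exponent $m$ the wrong job. Conditions (1) and (2) are essentially free: the paper's substitution forces $l\equiv 1\pmod p$ and $l\equiv 2\pmod 3$, so (2) holds for any $m$ and (1) just needs $m$ even. The real role of $m$ is to secure (3): by \cref{quadratic_reduction}, $C_{k}\bmod p$ is an explicit quadratic polynomial in $k$ (with coefficients built from $\alpha,\beta,\gamma$ according to the definition of $\iota$), and its having no root in $\mathbb{F}_{p}$ is equivalent to a quantity $\delta(a^{(m)},c^{(m)})$ being a quadratic non-residue mod $p$. Producing an even $m<p$ with this property is the genuinely nontrivial part (\cref{heuristic_recipe}): the paper needs an explicit P\'olya--Vinogradov bound for $p\geq 37$ and $p=19,23,31$, direct verification for $p=11,13,17,29$, and ad hoc treatment of $P=3,5,6,7,14$. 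Relatedly, the paper runs \cref{HBM} on the form $a^{3}+P^{2\iota}c^{3}$ (generator $a+c\pi^{2\iota}$, so $b\equiv 0$, $a\equiv\pm1$, $c\not\equiv 0\pmod p$), precisely the shape that makes the quadratic-in-$k$ computation tractable; with your form $x_{0}^{3}+P^{\iota}y_{0}^{3}$ the analogous non-vanishing analysis (and even $l\equiv 1\pmod p$) would have to be redone, and you give no argument for it. Without the $C_{k}$-nonvanishing mechanism and the existence proof for a suitable $m$, the proposal does not establish condition (3).
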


In order to prove \cref{Fermat},
we use \cref{HBM}.
Suppose that $p \neq 3$.
Let $h(A, C) = (3P^{\iota}A+1)^{3}+P^{2\iota}(3P^{\iota}C+1)^{3}$.
Then, since $\gcd(h(0, 0), h(1, 0), h(-1, 0)) = 1$,
we have $\gcd(h(\mathbb{Z}^{\oplus 2})) = 1$.
Therefore, \cref{HBM} implies that
there exist infinitely many prime numbers $l$ of the form
\[
	l = a^{3}+P^{2\iota}c^{3}
	\quad \text{with} \quad (a, c) = (3P^{\iota}A+1, 3P^{\iota}C+1) \in \mathbb{Z}^{\oplus 2}.
\]
On the other hand,
if $p = 3$,
we can use
$h(A, C) = (PA-1)^3+P^{2\iota}(PC+1)^{3}$.
Thus, if one notes that $2^{m} \equiv 1 \pmod{3}$ if and only if $m$ is even,
the proof of \cref{Fermat} is reduced to prove the following proposition.

\begin{proposition} \label{heuristic_recipe}
Let $p$ be an odd prime number
and $l \equiv 2 \pmod{3}$ be a prime number prime to $P$.
Suppose that there exist $a, b, c \in \mathbb{Z}$ satisfying the following conditions:
\begin{enumerate}
	\item
	$l = a^{3}+P^{\iota}b^{3}+P^{2\iota}c^{3}-3P^{\iota}abc$.
	
	\item
	$a \equiv \pm 1 \pmod{p}$, $b \equiv 0 \pmod{p}$, and $c \not\equiv 0 \pmod{p}$.
	
	\item
	If $p = 5$, then additionally $c \not\equiv -a \pmod{5}$.
	
	\item
	If $P = 3$, then additionally $c \equiv -a \pmod{3}$.
\end{enumerate}
Then, there exists a positive even integer $m < p$ such that
every primitive solution of $x^{3}+P^{\iota}y^{3} = l^{m}z^{p^{\iota}}$ satisfies $x \equiv y \equiv 0 \pmod{l}$.
\end{proposition}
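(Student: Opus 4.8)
The plan is to realise both sides of the Fermat equation $x^{3}+P^{\iota}y^{3}=l^{m}z^{p^{\iota}}$ as norms from $K=\mathbb{Q}(P^{1/3})$, to factor the ideal $(x+\varpi y)$, and to force a contradiction by a $\mathfrak{p}$-adic computation, where $\mathfrak{p}$ is the prime of $K$ above $p$. Writing $\pi=P^{1/3}$ and $\varpi=\pi^{\iota}$ one has $K=\mathbb{Q}(\varpi)$, $\varpi^{3}=P^{\iota}$, and $x^{3}+P^{\iota}y^{3}=N_{K/\mathbb{Q}}(x+\varpi y)$, while condition (1) says $l=N_{K/\mathbb{Q}}(\lambda)$ with $\lambda:=a+b\varpi+c\varpi^{2}\in\mathcal{O}_{K}$. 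Since $l\equiv 2\pmod 3$ and $l\nmid P$, cubing is a bijection of $\mathbb{F}_{l}^{\times}$ and $-3$ is a non-residue modulo $l$, so $T^{3}-P^{\iota}$ has exactly one root in $\mathbb{F}_{l}$; hence $l\mathcal{O}_{K}=\mathfrak{l}_{1}\mathfrak{l}_{2}$ with residue degrees $1$ and $2$. As $(\lambda)$ has ideal norm $l$, it must equal $\mathfrak{l}_{1}$; in particular $\mathfrak{l}_{1}$ is principal.

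Next I would choose $m$: any even integer with $2\le m<p$ satisfies $l^{m}\equiv 1\pmod{3p}$, because $l\equiv 2\pmod 3$ and, by condition (2), $l\equiv a^{3}\equiv\pm1\pmod p$ (so $m=2$ already works, but it is convenient to let $m$ run over the even residues in $[2,p)$). Fix a primitive solution $(x,y,z)$, the case $z=0$ being vacuous, and suppose for contradiction that $l\nmid x$ or $l\nmid y$. A short argument with the equation and primitivity then yields $\gcd(x,y)=1$, $p\nmid xz$ (though possibly $p\mid y$), $l\ne p$, and $2\nmid z$ if $2\mid P$. Any prime $\mathfrak{q}\mid(x+\varpi y)$ lies over some $q\mid lz$, and all such $q$ are coprime to $[\mathcal{O}_{K}:\mathbb{Z}[\varpi]]=P^{\iota-1}$; since $\gcd(x,y)=1$ forces $\varpi\equiv-x/y$ into the prime field modulo $\mathfrak{q}$, $\mathfrak{q}$ has residue degree $1$, and comparing $q$-adic valuations on the norm identity pins the exponents:
\[
	(x+\varpi y)=\mathfrak{l}_{1}^{\,m+p^{\iota}v_{l}(z)}\,\mathfrak{b}^{\,p^{\iota}},\qquad \mathfrak{b}\ \text{coprime to}\ l\mathfrak{p}.
\]
Since $\mathfrak{l}_{1}$ is principal, $\mathfrak{b}^{p^{\iota}}$ is principal, so by \cref{class_number} (which gives $p\nmid h_{K}$) $\mathfrak{b}=(\mu)$ is principal; absorbing $\pm1$ into $\mu^{p^{\iota}}$ ($p^{\iota}$ odd) and writing the resulting unit as a power of the fundamental unit, I get
\[
	x+\varpi y=\epsilon^{\,j}\lambda^{\,k}\mu^{\,p^{\iota}},\qquad j\in\mathbb{Z},\quad k\equiv m\pmod{p^{\iota}}.
\]

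The crux is to contradict this locally at $\mathfrak{p}$. For $p\ge 5$ the extension $K_{\mathfrak{p}}/\mathbb{Q}_{p}$ is tamely ramified of degree $3$ with $\mu_{p}\not\subset K_{\mathfrak{p}}$ and $v_{\mathfrak{p}}(p)=3<p-1$, so the $\mathfrak{p}$-adic logarithm identifies $\mathcal{O}_{K_{\mathfrak{p}}}^{\times}/(\mathcal{O}_{K_{\mathfrak{p}}}^{\times})^{p^{\iota}}\cong\mathfrak{p}/p^{\iota}\mathfrak{p}\cong(\mathbb{Z}/p^{\iota})^{\oplus 3}$, coordinates being read off from the $\pi$-adic expansion; reducing the last display kills $\mu^{p^{\iota}}$ and leaves $\overline{x+\varpi y}=j\bar\epsilon+m\bar\lambda$. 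By condition (2) the $b\varpi$-term of $\lambda$ vanishes to high $\mathfrak{p}$-order, so $\bar\lambda$ has zero $\varpi$-coordinate and non-zero $\varpi^{2}$-coordinate $\equiv c/a$; by the definition of $\iota$ the $\varpi$-coordinate of $\bar\epsilon$ is (up to a unit) the reduction of $\beta$ or of $\gamma$, hence non-zero unless $\beta\equiv\gamma\equiv 0\pmod p$. If $p\mid y$, then $\varpi y$ vanishes to high $\mathfrak{p}$-order, so $\overline{x+\varpi y}$ has zero $\varpi$- and $\varpi^{2}$-coordinates; matching coordinates forces $j\equiv 0$ and then $m(c/a)\equiv 0\pmod p$, i.e.\ $p\mid m$, which is impossible. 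If $p\nmid y$, then these coordinates of $\overline{x+\varpi y}$ are $\equiv y/x$ and $\equiv-\tfrac12(y/x)^{2}\ne 0$; eliminating $j$ leaves a quadratic for $t:=y/x\bmod p$ whose discriminant is a non-zero square multiple of $\bar\epsilon_{2}^{\,2}-2\bar\epsilon_{1}^{\,2}m(c/a)$ (here $(\bar\epsilon_{1},\bar\epsilon_{2},\bar\epsilon_{3})$ is the coordinate vector of $\bar\epsilon$), and one chooses the admissible even $m$ so that this is a non-residue mod $p$, making the quadratic insoluble. Either way we contradict the assumption, so every primitive solution of $x^{3}+P^{\iota}y^{3}=l^{m}z^{p^{\iota}}$ satisfies $x\equiv y\equiv 0\pmod l$. (When $\beta\equiv\gamma\equiv 0\pmod p$ the $\varpi$-coordinate of $\bar\epsilon$ vanishes as well, forcing $\overline{x+\varpi y}$ to have zero $\varpi$-coordinate, i.e.\ $p\mid y$, which returns us to the previous case.)

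The delicate points — and where the remaining hypotheses enter — are: (i) controlling $\bar\epsilon$, of which we know only the location of its leading non-zero coordinate (through $\iota$) and not an explicit formula, and then ensuring that some admissible even $m<p$ makes the $p\nmid y$ discriminant a non-residue; for $p=5$ and for $P=3$ the available $m$ are few and the congruences tight, and conditions (3) and (4) (namely $c\not\equiv-a\pmod 5$, resp.\ $c\equiv-a\pmod 3$) are precisely what make this work; (ii) the case $p=3$, where $K_{\mathfrak{p}}/\mathbb{Q}_{3}$ is wildly ramified, so the logarithmic identification above must be replaced by an explicit analysis modulo small powers of $\mathfrak{p}$; and (iii) the case $\iota=2$, where one must keep track of the non-maximal order $\mathbb{Z}[\varpi]\subsetneq\mathcal{O}_{K}$ of index $P$, with $\pi^{2}$ and $\pi^{4}$ playing the roles of $\pi$ and $\pi^{2}$ and everything computed modulo $\mathfrak{p}^{7}$ (i.e.\ modulo $p^{2}$-th powers). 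I expect (i) to be the main obstacle: extracting a usable congruence from the fundamental unit without an explicit description of it.
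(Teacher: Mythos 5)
Your algebraic skeleton is essentially the paper's: factor $(x+\varpi y)$ into $\mathfrak{l}_1^{m}$ times a $p^{\iota}$-th power of an ideal prime to $l\mathfrak{p}$, invoke \cref{class_number} to make that ideal principal, write $x+\varpi y=\epsilon^{j}\lambda^{m}\mu^{p^{\iota}}$ up to sign, and derive a congruence modulo powers of $\mathfrak{p}$ whose insolubility is governed by a discriminant; your quantity $\bar\epsilon_{2}^{2}-2\bar\epsilon_{1}^{2}m(c/a)$ is, up to a nonzero square factor, exactly the paper's $\delta(a^{(m)},c^{(m)})$ from \cref{quadratic_reduction} (the paper gets a quadratic in the unit exponent $k$ by expanding $\epsilon^{k}$ modulo $\pi^{3\iota}$, you get an equivalent quadratic in $t=y/x$ after eliminating $j$; the discriminants coincide). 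So the route is the same up to the final step.

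The genuine gap is that final step: you never prove that an even $m$ with $0<m<p$ exists for which the discriminant is a quadratic non-residue — you only say ``one chooses'' such an $m$, and in your closing paragraph you concede you do not see how to do this without explicit knowledge of $\epsilon$. But this existence statement \emph{is} the content of the proposition; everything before it only reduces the problem to it. The paper resolves it as follows: since the discriminant is an affine-linear function of $m$, the even values $m<p$ sweep out a set of $(p-1)/2$ consecutive shifts, and one shows this set meets the non-residues by an explicit P\'olya--Vinogradov bound (Pomerance's version) for $p\geq 37$ and $p=19,23,31$, by direct residue tables for $p=11,13,17,29$, and by computing the fundamental units explicitly for the remaining cases $P\in\{3,5,6,7,14\}$ — which is precisely where hypotheses (3) and (4) (the conditions $c\not\equiv -a\pmod 5$ and $c\equiv -a\pmod 3$) are actually used; your proposal only gestures at them. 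A secondary, smaller gap: your $\mathfrak{p}$-adic logarithm identification of $\mathcal{O}_{K_{\mathfrak{p}}}^{\times}/(\mathcal{O}_{K_{\mathfrak{p}}}^{\times})^{p^{\iota}}$ fails for $p=3$ (wild ramification, $v_{\mathfrak{p}}(p)=3>p-1$) and needs extra care for $\iota=2$; you flag both but do not carry them out, whereas the paper's elementary expansion of $\epsilon^{k}(a+c\pi^{2\iota})^{m}$ modulo $\mathfrak{p}^{3\iota}$ in \cref{quadratic_reduction} (fed into \cref{Fujiwara}) avoids the logarithm and treats these cases uniformly. As written, the proposal is an accurate reduction of the proposition to its hardest point, not a proof of it.
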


First, we prove the following proposition as an intermediate step.

\begin{proposition} \label{Fujiwara}
Let $p$ be a prime number,
$l$ be a prime number such that $l$ is prime to $P$ and $l \equiv 2 \pmod{3}$,
and $m \in \mathbb{Z}_{\geq 1}$.
Assume that there exist $a+b\pi^{\iota}+c\pi^{2\iota} \in \mathcal{O}_{K}$
with $a, b, c \in \mathbb{Z}$ satisfying the following conditions:
\begin{enumerate}
\item
	$l = a^{3}+b^{3}P^{\iota}+c^{3}P^{2\iota}-3abcP^{\iota}$.
	\footnote{
	Since $l \equiv 2 \pmod{3}$,
	$\mathcal{O}_{K}$ has prime ideals $\mathfrak{p}_{l}$ and $\mathfrak{p}_{l^{2}}$
	of norms of degree 1 and 2 respectively.
	Therefore, the first condition holds (up to signature)
	if and only if $\mathfrak{p}_{l}$ is generated by $a+b\pi+c\pi^{2}$.
	}
	
\item
	If we define $A_{k}, B_{k}, C_{k} \in \mathbb{Z}$ by
	\[
		A_{k}+B_{k}\pi^{\iota}+C_{k}\pi^{2\iota}
		= \epsilon^{k}(a+b\pi^{\iota}+c\pi^{2\iota})^{m} \quad (k \in \mathbb{Z}),
	\]
	then we have $C_{k} \not\equiv 0 \pmod{p}$ for every $k \in \mathbb{Z}$.
\end{enumerate}
Then, for every $n \in \mathbb{Z}_{\geq 5}$ divisible by $p^{\iota}$,
every primitive solution of $x^{3}+P^{\iota}y^{3} = l^{m}z^{n}$
satisfies $x \equiv y \equiv 0 \pmod{l}$.
\end{proposition}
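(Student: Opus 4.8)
The plan is to run a Fujiwara-style descent in the pure cubic field $K = \mathbb{Q}(\pi)$, $\pi = P^{1/3}$, with $\mathcal{O}_K = \mathbb{Z}[\pi]$, and to contradict condition (2). Suppose $(x,y,z) \in \mathbb{Z}^{\oplus 3}$ is a primitive solution of $x^{3} + P^{\iota}y^{3} = l^{m}z^{n}$. If $l$ divides exactly one of $x, y$, then the left-hand side is prime to $l$ while $m \ge 1$, which is absurd; so we may assume $l \nmid xy$, and primitivity then gives $\gcd(x,y) = 1$. It suffices to rule this out. Using $x^{3} + P^{\iota}y^{3} = N_{K/\mathbb{Q}}(x + \pi^{\iota}y)$, I would study the principal ideal $(x + \pi^{\iota}y) \subset \mathcal{O}_K$, which has norm $l^{m}z^{n}$.

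First I would factor this ideal. Since $\gcd(x,y) = 1$, at every rational prime $q \nmid 3P$ only primes of residue degree $1$ can divide $(x+\pi^{\iota}y)$ — reducing $x+\pi^{\iota}y$ modulo a prime of degree $\ge 2$ would force $q \mid x$ and $q \mid y$ — and no two distinct degree-one primes above $q$ divide it simultaneously; hence for such $q \ne l$ the valuation is $n\,v_q(z)$, a multiple of $n$. At $l$: because $l \equiv 2 \pmod 3$ one has $l\mathcal{O}_K = \mathfrak{p}_l\mathfrak{p}_{l^{2}}$ with $f(\mathfrak{p}_{l^{2}}) = 2$; then $l \nmid xy$ excludes $\mathfrak{p}_{l^{2}}$, and $v_{\mathfrak{p}_l}(x+\pi^{\iota}y) = m + n\,v_l(z)$. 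At the ramified primes above $3$, $p$, and (only when $P = 2p$) $2$, the element $\pi$ is a uniformizer, so $v(x+\pi^{\iota}y) = \min\{3v(x),\,\iota + 3v(y)\}$; since this lies in $n\mathbb{Z}$ while $\gcd(x,y)=1$ forces it into $\{0,\iota\}$ and $0 < \iota < n$, it must equal $0$ at each of these. In particular $\mathfrak{p} \nmid (x+\pi^{\iota}y)$ for the prime $\mathfrak{p}$ above $p$ (and $p \nmid z$). Feeding in condition (1), which identifies $\mathfrak{p}_l = (u)$ with $u = a + b\pi^{\iota} + c\pi^{2\iota}$, the factorization collapses to
\[
	(x + \pi^{\iota}y) = (u^{m})\,\mathfrak{a}^{n}, \qquad \mathfrak{p} \nmid \mathfrak{a},
\]
for some integral ideal $\mathfrak{a}$.

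Next I would descend to elements. The identity above shows $[\mathfrak{a}]^{n} = 1$ in $\mathrm{Cl}(K)$, so the order $d$ of $[\mathfrak{a}]$ divides $\gcd(n, h_K)$; since $p \nmid h_K$ by \cref{class_number}, $p \nmid d$, hence $p^{\iota} \mid n/d$. Writing $\mathfrak{a}^{d} = (\delta)$ with $\delta$ a $\mathfrak{p}$-unit and using $\mathcal{O}_K^{\times} = \{\pm 1\} \times \langle\epsilon\rangle$, the identity becomes $x + \pi^{\iota}y = \pm\,\epsilon^{k}u^{m}\delta^{n/d}$ for some $k \in \mathbb{Z}$. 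Now reduce modulo $\mathfrak{p}^{2\iota+1}$. If $\delta \equiv c_0 \pmod{\mathfrak{p}}$ with $c_0 \in \mathbb{Z}$, $p \nmid c_0$, then every binomial term $\binom{p^{\iota}}{j}(\delta - c_0)^{j}$ with $0 < j \le p^{\iota}$ has $\mathfrak{p}$-valuation $\ge 2\iota+1$ (as $p \mid \binom{p^{\iota}}{j}$, $v_{\mathfrak{p}}(p) = 3$, and $v_{\mathfrak{p}}(\delta - c_0) \ge 1$), so $\delta^{p^{\iota}} \equiv c_0^{p^{\iota}} \pmod{\mathfrak{p}^{2\iota+1}}$ and hence $\delta^{n/d} \equiv c \pmod{\mathfrak{p}^{2\iota+1}}$ for some $c \in \mathbb{Z}$ with $p \nmid c$. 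Substituting and using the definition of $A_k, B_k, C_k$ in condition (2), moving everything to one side gives
\[
	r + s\,\pi^{\iota} + t\,\pi^{2\iota} \equiv 0 \pmod{\mathfrak{p}^{2\iota+1}}, \qquad r = x \mp cA_k,\ \ s = y \mp cB_k,\ \ t = \mp cC_k \in \mathbb{Z},
\]
with $p \nmid t$ by condition (2). But the three summands have $\mathfrak{p}$-valuations lying in $3\mathbb{Z}_{\ge 0}$, in $\iota + 3\mathbb{Z}_{\ge 0}$, and exactly equal to $2\iota$ (the last because $p \nmid t$); since $0$, $\iota$, $2\iota$ are pairwise distinct modulo $3$, the valuation of the sum equals the minimum of the three, which is $\le 2\iota < 2\iota+1$. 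This contradiction completes the proof.

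The crux is the descent from ideals to elements: $\mathfrak{a}$ need not be principal, and the whole argument survives only because \cref{class_number} ($p \nmid h_K$) keeps the factor $p^{\iota}$ — essential for the $\mathfrak{p}$-adic Fermat step — inside $n/d$. The remaining delicate point is the valuation bookkeeping at the ramified primes, in particular tracking $2$ when $P = 2p$ and the coincidence $\mathfrak{p}_3 = \mathfrak{p}$ when $p = 3$; this is precisely where the congruence conditions on $a,b,c$ in \cref{heuristic_recipe} are tuned so that the local analysis goes through.
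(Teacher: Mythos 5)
Your proof is correct and is essentially the paper's own argument: the same factorization of the ideal $(x+\pi^{\iota}y)$ into $\mathfrak{p}_{l}^{m}$ times an $n$-th power prime to $\mathfrak{p}$ (using $l\equiv 2\pmod 3$ to exclude $\mathfrak{p}_{l^{2}}$ and primitivity to exclude split/ramified interference), the same appeal to \cref{class_number} to make a $p^{\iota}$-th power of a principal generator appear, and the same reduction of $x+\pi^{\iota}y=\pm\epsilon^{k}u^{m}\delta^{n/d}$ modulo $\mathfrak{p}^{2\iota+1}$ to contradict $C_{k}\not\equiv 0\pmod{p}$. Two small slips, neither of which breaks anything: when $p\neq 3$ the element $\pi$ is a unit, not a uniformizer, at the prime above $3$ (so your $\min$-formula does not apply there; this is harmless because that prime never enters the final step --- its valuation of $x+\pi^{\iota}y$ is automatically $n\,v_{3}(z)$ and folds into $\mathfrak{a}^{n}$, and only $\mathfrak{p}\nmid\mathfrak{a}$ is used afterwards), and for $\iota=2$ your parenthetical only yields $\mathfrak{p}$-valuation $\geq 4$ where $\geq 5$ is needed, though the claim is true since $v_{p}\binom{p^{2}}{j}=2$ for $0<j<p$ while the terms with $j\geq p$ have $\mathfrak{p}$-valuation at least $3+p$.
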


We prove \cref{Fujiwara} along a classical idea as done in \cite{Fujiwara},
where Fujiwara proved the insolubility of $x^{3}+5y^{3} = 17z^{5}$.
We use the following lemma,
which is an immediate consequence of
\cite[Corollarie 2]{Barrucand-Louboutin}, \cite[Theorem 3]{Cusick}, and \cite[Corollary 4.2.1]{Barrucand-Cohn}.
\footnote{
Indeed, we may check \cref{class_number} (1) for $p \leq 139$ directly
(e.g. by Magma \cite{Magma}).
If $p > 139$,
then
by \cite[Corollarie 2]{Barrucand-Louboutin} and \cite[Theorem 3]{Cusick},
the class number of $K$ is bounded by
\[
	\frac{1}{4}p \cdot \frac{2\log p + \log 3}{2\log p - 2\log 3}
		\quad \text{or} \quad
			\frac{3}{4}p \cdot \frac{2\log p + 3\log 3}{2\log p}
\]
according to $p \equiv \pm1 \pmod{9}$ or not.
For \cref{class_number} (2),
the similar argument works if $p \equiv \pm4 \bmod{4}$.
If $p \not\equiv \pm4 \bmod{9}$,
we use \cite[Corollary 4.2.1]{Barrucand-Cohn} to deduce that
the class number of $K$ is divisible by $3$
and the quotient is strictly smaller than $p$ by the above argument.
}

\begin{lemma} \label{class_number}
Let $p$ be a prime number.
\begin{enumerate}
\item
The class number of $K = \mathbb{Q}(p^{1/3})$ is smaller than $p$.

\item
The class number of $K = \mathbb{Q}((2p)^{1/3})$ is prime to $p$.
\end{enumerate}
\end{lemma}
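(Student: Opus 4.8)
The plan is to obtain part~(1) from the analytic class number formula combined with an explicit upper bound for $h_{K}R_{K}$ and an explicit lower bound for the regulator $R_{K}=\log\epsilon$ of a pure cubic field, and then to squeeze part~(2) out of the same estimate together with a forced divisibility of the class number by $3$ in the one range where the analytic bound is too weak.

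Concretely, I would first fix $d=p$ for~(1) and $d=2p$ for~(2), so that $K=\mathbb{Q}(d^{1/3})$ is a non-Galois cubic field of signature $(r_{1},r_{2})=(1,1)$ with $w_{K}=2$ and unit rank $1$; by Dedekind's computation of $\mathcal{O}_{K}$ (see \cite{Dedekind}), $|d_{K}|=27d^{2}$ if $d\not\equiv\pm1\pmod 9$ and $|d_{K}|=3d^{2}$ otherwise, so $|d_{K}|$ is comparable to $p^{2}$. Next I would factor $\zeta_{K}(s)=\zeta(s)L(s,\rho)$ with $L(s,\rho)$ the entire Artin $L$-function of the standard $2$-dimensional representation of $\mathrm{Gal}(K(\zeta_{3})/\mathbb{Q})\cong S_{3}$, so that
\[
	L(1,\rho)\;=\;\operatorname{Res}_{s=1}\zeta_{K}(s)\;=\;\frac{2^{r_{1}}(2\pi)^{r_{2}}\,h_{K}R_{K}}{w_{K}\sqrt{|d_{K}|}}\;=\;\frac{2\pi\,h_{K}R_{K}}{\sqrt{|d_{K}|}}.
\]
Feeding into this the explicit upper bound for $h_{K}R_{K}$ of Barrucand--Louboutin \cite[Corollaire 2]{Barrucand-Louboutin} and the explicit lower bound $R_{K}\gg\log|d_{K}|$ for regulators of complex cubic fields of Cusick \cite[Theorem 3]{Cusick}, and tracking the constants, I expect to arrive exactly at the two bounds quoted in the footnote to \cref{class_number}, namely $h_{K}\leq\tfrac{p}{4}\cdot\tfrac{2\log p+\log 3}{2\log p-2\log 3}$ or $h_{K}\leq\tfrac{3p}{4}\cdot\tfrac{2\log p+3\log 3}{2\log p}$, according as $p\equiv\pm1\pmod 9$ or not.

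For~(1) I would then note that each of these expressions is increasing in $\log p$ with limit $p/4$, resp.\ $3p/4$, hence $<p$ for all $p>139$, and dispose of the finitely many primes $p\leq139$ by computing $h_{\mathbb{Q}(p^{1/3})}$ directly (e.g.\ in Magma \cite{Magma}). For~(2), with $d=2p$: when $p\equiv\pm4\pmod 9$ one has $2p\equiv\pm1\pmod 9$, so $|d_{K}|=12p^{2}$ is ``small'' and the same computation yields $h_{K}<p$ for $p>139$, whence $p\nmid h_{K}$; when $p\not\equiv\pm4\pmod 9$ the larger discriminant $|d_{K}|=27(2p)^{2}$ gives only $h_{K}<3p$, and here I would invoke Barrucand--Cohn \cite[Corollary 4.2.1]{Barrucand-Cohn} to force $3\mid h_{K}$, divide the bound by $3$ to get $h_{K}/3<p$, and conclude $p\nmid h_{K}$ since $p\neq3$. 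The primes $p\leq139$ are again checked directly.

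The hard part will not be any individual estimate --- the analytic inputs are all available off the shelf --- but the bookkeeping that welds them together in part~(2): in the large-discriminant range the analytic bound alone does not establish $p\nmid h_{K}$, so one must check that Barrucand--Cohn's divisibility by $3$ is available in precisely that range, and one must verify that the threshold $p>139$ (and the attendant list of small primes to be handled by machine) is uniform over all residue classes of $p$ modulo $9$ and over both $d=p$ and $d=2p$, which is where the exact shape of the Cusick and Barrucand--Louboutin constants enters.
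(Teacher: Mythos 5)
Your proposal is correct and takes essentially the same route as the paper's own (footnoted) proof: the two explicit bounds you derive from Barrucand--Louboutin's Corollaire 2 combined with Cusick's Theorem 3 are precisely the expressions quoted there, with the primes $p \leq 139$ disposed of by direct computation. The bookkeeping you flag for part (2) is handled in the paper exactly as you propose, namely the split at $p \equiv \pm 4 \pmod{9}$, with Barrucand--Cohn's Corollary 4.2.1 forcing $3 \mid h_{K}$ in the large-discriminant range so that the quotient $h_{K}/3$ is smaller than $p$.
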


\begin{proof} [Proof of \cref{Fujiwara}]
We prove the assertion by contradiction.
Suppose that there exists a primitive triple $(x, y, z) \in \mathbb{Z}^{\oplus 3}$
such that $x^{3}+p^{\iota}y^{3} = l^{m}z^{n}$, and either $x$ or $y$ is prime to $l$.

First, note that
since either $x$ or $y$ is prime to $l$ and $\gcd(l, P) = 1$,
$x^{2}-xy\pi^{\iota}+y^{2}\pi^{2\iota}$ cannot be divisible by $l$.
Moreover,
$l \equiv 2 \pmod{3}$ splits to the product of two prime ideals
$\mathfrak{p}_{l}$ and $\mathfrak{p}_{l^{2}} $ of degree 1 and 2 respectively.
Suppose that $x+y\pi^{\iota}$ is divisible by $\mathfrak{p}_{l^{2}}$.
Then, the product of its conjugates
$(x+\zeta_{3} y\pi^{\iota})(x+\zeta_{3}^{2} y\pi^{\iota})
= x^{2}-xy\pi^{\iota}+y^{2}\pi^{2\iota}$
is divisible by $l$, a contradiction
(cf.\  the following argument for $q \equiv 2 \pmod{3}$).
Therefore, $x^{2}-xy\pi^{\iota}+y^{2}\pi^{2\iota}$ is divisible by $\mathfrak{p}_{l^{2}}^{m}$
but not divisible by $\mathfrak{p}_{l}$.
Accordingly,
$x+y\pi^{\iota}$ is divisible by $\mathfrak{p}_{l}^{m}$ but not divisible by $\mathfrak{p}_{l^{2}}$.

Next, suppose that $x+y\pi^{\iota}$ is divisible by a prime ideal above a prime divisor $q$ of $z$.
Then, note that if $\gcd(q, P) = 1$ and $x+y\pi^{\iota}$ or $x^{2}-xy\pi^{\iota}+y^{2}\pi^{2\iota}$ is divisible by $q$ itself,
then we have $x \equiv y \equiv 0 \pmod{q}$, which contradicts that $(x, y, z)$ is primitive.
On the other hand, since $P \not\equiv \pm1 \pmod{9}$,
the possible decomposition types of $q$ in $K$ are as follows:
\begin{enumerate}
\item
	$(q) = \mathfrak{p}_{q, 1} \mathfrak{p}_{q, 2} \mathfrak{p}_{q, 3}$,
	i.e., $q \equiv 1 \pmod{3}$ and $P \pmod{q} \in \mathbb{F}_{q}^{\times 3}$
	
\item
	$(q) = \mathfrak{p}_{q} \mathfrak{p}_{q^{2}}$, i.e., $q \equiv 2 \pmod{3}$ and $P \not\equiv 0 \pmod{q}$

\item
	$(q) = \mathfrak{p}_{q}^{3}$, i.e., $P \equiv 0 \pmod{q}$ or $q = 3$.
\end{enumerate}
In each case, we have the following conclusion:
\begin{enumerate}
\item
	If $x+y\pi^{\iota}$ is divisible by
	distinct two prime ideals above $q$, say $\mathfrak{p}_{q, 1}$ and $\mathfrak{p}_{q, 2}$, 
	then $x^{2}-xy\pi^{\iota}+y^{2}\pi^{2\iota}$
	is divisible by $(\mathfrak{p}_{q, 1}\mathfrak{p}_{q, 3}) \cdot (\mathfrak{p}_{q, 2}\mathfrak{p}_{q, 3})$,
	hence by $q$, a contradiction.
	Therefore, $x+y\pi^{\iota}$ is divisible by $\mathfrak{p}_{q, 1}^{nv_{q}(z)}$
	but not by $\mathfrak{p}_{q, 2}$ nor $\mathfrak{p}_{q, 3}$
	if we replace $\mathfrak{p}_{q, 1}, \mathfrak{p}_{q, 2}, \mathfrak{p}_{q, 3}$ to each other if necessary.
	
\item
	In this case,
	$q$ is decomposed in $K(\zeta_{3})$ so that
	$\mathfrak{p}_{q} = \mathfrak{P}_{q^{2}, 1}$ and
	$\mathfrak{p}_{q^{2}} = \mathfrak{P}_{q^{2}, 2}\mathfrak{P}_{q^{2}, 3}$.
	If $x+y\pi^{\iota}$ is divisible by $\mathfrak{p}_{q^{2}}$,
	then	$x^{2}-xy\pi^{\iota}+y^{2}\pi^{2\iota}$
	is divisible by
	$(\mathfrak{P}_{q^{2}, 1}\mathfrak{P}_{q^{2}, 2}) \cdot (\mathfrak{P}_{q^{2}, 1}\mathfrak{P}_{q^{2}, 3})$,
	hence by $q$, a contradiction.
	Therefore, $x+y\pi^{\iota}$ is divisible by $\mathfrak{p}_{q}^{nv_{q}(z)}$
	but not by $\mathfrak{p}_{q^{2}}$.

\item
	In this case, since $x^{3}+p^{\iota}y^{3}$ is divisible by $\mathfrak{p}_{q}^{3n}$,
	$x+y\pi^{\iota}$ is divisible by $q$.
	This implies that both $x$ and $y$ are divisible by $q$, which contradicts that $(x, y, z)$ is primitive.
\end{enumerate}

As a consequence, we see that
there exists an integral ideal $\mathfrak{w}$ of $\mathcal{O}_{K}$ such that
\[
	(x+y\pi^{\iota})
	= \mathfrak{p}_{l}^{m}\mathfrak{w}^{n} \quad
	\text{and} \quad
	(P, \mathfrak{w}) = 1.
\]
Since the first assumption implies that $\mathfrak{p}_{l}$ is generated by $a+b\pi^{\iota}+c\pi^{2\iota}$,
$\mathfrak{w}^{n}$ is also a principal ideal.
Moreover, Lemma \ref{class_number} implies that
$\mathfrak{w}^{n/p^{\iota}}$ is also generated by a single element
$w_{0}+w_{1}\pi+w_{2}\pi^{2} \in \mathcal{O}_{K}$ with $w_{0}, w_{1}, w_{2} \in \mathbb{Z}$.
Therefore, there exists $k \in \mathbb{Z}$ such that
\begin{align*}
	x+y\pi^{\iota}
	&=
	\epsilon^{k}(a+b\pi^{\iota}+c\pi^{2\iota})^{m}(w_{0}+w_{1}\pi+w_{2}\pi^{2})^{p^{\iota}} \\
	&\equiv
	A_{k}w_{0}^{n}+B_{k}w_{0}^{n}\pi^{\iota}+C_{k}w_{0}^{n}\pi^{2\iota} \pmod{\mathfrak{p}^{2\iota+1}}
\end{align*}
In particular, we have $C_{k}w_{0} \equiv 0 \pmod{p}$.
On the other hand,
since $(P, \mathfrak{w}) = 1$, we have $w_{0} \not\equiv 0 \pmod{p}$.
Therefore, $C_{k} \equiv 0 \pmod{p}$ for some $k$,
which contradicts the assumption.
This completes the proof.
\end{proof}

Now, we can prove \cref{heuristic_recipe}.
Let $\rho(X, Y, Z) := \frac{Y}{2X}-\frac{Z}{Y} \in \mathbb{Q}(X, Y, Z)$
and
\[
	\delta(X, Z) :=
	\begin{cases}
	\rho(\alpha, \beta, \gamma)^{2}-2 \cdot \frac{Z}{X} & \text{if $\beta \not\equiv 0 \pmod{p}$} \\
	\rho(\alpha, \gamma, \frac{\beta}{p})^{2}-2 \cdot \frac{Z}{X} & \text{if $\beta \equiv 0 \pmod{p}$ and $\gamma \not\equiv 0 \pmod{p}$}
	\end{cases}	
	\in \mathbb{Q}(X, Z).
\]

\begin{lemma} \label{quadratic_reduction}
Let $a, c \in \mathbb{Z}$.
Let $(A_{k}, B_{k}, C_{k}) \in \mathbb{Z}^{\oplus 3}$ ($k \in \mathbb{Z}$) such that
\[
	A_{k}+B_{k}\pi^{\iota}+C_{k}\pi^{2\iota}
	\equiv \epsilon^{k}(a+c\pi^{2\iota}) \pmod{\pi^{3 \iota}}.
\]
\begin{enumerate}
\item
Suppose that $\beta \not\equiv 0 \pmod{p}$,
or $\beta \equiv 0 \pmod{p}$ and $\gamma \not\equiv 0 \pmod{p}$.
Then, $C_{k} \not\equiv 0 \pmod{p}$ for every $k$
if and only if $\delta(a, c)$ is not a quadratic residue modulo $p$.

\item
Suppose that $\beta \equiv \gamma \equiv 0 \pmod{p}$.
Then, $C_{k} \not\equiv 0 \pmod{p}$ for every $k$
if and only if $c \not\equiv 0 \pmod{p}$.
\end{enumerate}
\end{lemma}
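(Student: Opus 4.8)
The strategy is to work in the completion $\mathcal{O}_{\mathfrak p}$, where $\pi$ (in case $\iota=1$) or $\pi^{2}$ (which still has valuation $2$, so one should be careful) is a uniformizer, and to compute the $\pi^{2\iota}$-coefficient of $\epsilon^{k}(a+c\pi^{2\iota})$ modulo $\mathfrak p$ explicitly as a function of $k$. First I would set $\eta=\epsilon \bmod \mathfrak p^{3}$ and write $\eta = \alpha+\beta\pi+\gamma\pi^{2}$ (note $\alpha\not\equiv 0\pmod p$ since $\epsilon$ is a unit and $\mathfrak p\mid p$); since $\pi^{3}=P\equiv 0$ and $\pi^{4}\equiv 0 \pmod{\mathfrak p^{3}}$, the multiplication $\eta\mapsto \eta\cdot(-)$ on $\mathcal{O}_{K}/\mathfrak p^{3\iota}$ is given by an explicit lower-triangular-unipotent-up-to-scalar $3\times 3$ matrix over $\mathbb F_{p}$, and $C_{k}$ (the $\pi^{2\iota}$-coordinate) is, up to the unit factor $\alpha^{k}$, a quadratic polynomial in $k$ with leading term a multiple of $\beta$ (case $\iota=1$) or of $\gamma$ (case $\iota=2$). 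Here is where the case split in the statement of $\iota$ and in the lemma enters: in part (2), $\beta\equiv\gamma\equiv 0\pmod p$ forces $\eta\equiv\alpha\pmod{\mathfrak p^{3}}$, so $C_{k}\equiv \alpha^{k}c \pmod p$ for all $k$ simultaneously, giving immediately that $C_{k}\not\equiv 0$ for every $k$ iff $c\not\equiv 0\pmod p$.

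For part (1) the heart of the matter is that $C_{k}$, after dividing by the unit $\alpha^{k}$, becomes a genuine quadratic (in the $\iota=1$ subcase) or affine-quadratic expression in a new variable $t$ that ranges, as $k$ ranges over $\mathbb Z$, over all of $\mathbb F_{p}$ — this uses that $\beta/\alpha\not\equiv 0$ (resp. $\gamma/\alpha\not\equiv 0$) so the "shift per step'' is a nonzero element of $\mathbb F_{p}$. Concretely, multiplying by $\eta$ once changes the relevant coordinate by adding a fixed nonzero multiple of the previous coordinates, so after completing the square one finds $C_{k}/\alpha^{k} \equiv (\text{nonzero const})\cdot\big((t-t_{0})^{2}-\delta(a,c)\big)$ where $t_{0}$ and the reduction of $\delta(a,c)$ are exactly the quantities appearing in the definition of $\rho$ and $\delta$ just before the lemma (the two formulas for $\delta$ corresponding precisely to whether one uses $\beta$ or, when $\beta\equiv 0$, the replacement $\beta\leadsto\gamma$, $\gamma\leadsto\beta/p$). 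Then $C_{k}\not\equiv 0$ for all $k$ is equivalent to $(t-t_{0})^{2}\not\equiv \delta(a,c)$ having no solution $t\in\mathbb F_{p}$, i.e. to $\delta(a,c)$ being a non-residue mod $p$ (or, trivially, $0$ — but one checks $\delta(a,c)=0$ is excluded, or rather, $\delta(a,c)$ a non-square is the stated condition). The bookkeeping of constants, and verifying they are units mod $p$, is routine once the matrix of multiplication-by-$\epsilon$ on $\mathcal O_{K}/\mathfrak p^{3\iota}$ is written down.

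The step I expect to be the main obstacle is getting the $\iota=2$ case of part (1) to line up with the stated formula $\rho(\alpha,\gamma,\beta/p)$: when $\beta\equiv 0\pmod p$ one has $\pi^{2}\mid\beta$ "morally'' but $\beta/p$ is the honest integer, and one must track how $\epsilon=\alpha+\beta\pi+\gamma\pi^{2}$ acts on $a+c\pi^{2\iota}=a+c\pi^{4}$, remembering $\pi^{3}=P$ so $\pi^{4}=P\pi$ and $\pi^{5}=P\pi^{2}$; the valuations reshuffle and the role of $\beta$ and $\gamma$ swap, which is exactly what the substitution $(\beta,\gamma)\leadsto(\gamma,\beta/p)$ in the definition of $\delta$ records. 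I would handle this by treating $\iota=1$ and $\iota=2$ in parallel, writing $\epsilon=\alpha+\beta'\varpi+\gamma'\varpi^{2}$ with $\varpi=\pi$ and $(\beta',\gamma')=(\beta,\gamma)$ when $\iota=1$, versus extracting the right two coordinates when $\iota=2$, so that a single computation with the matrix of multiplication-by-$\epsilon$ covers both. Everything else — irreducibility of the relevant quadratic, the count of squares versus non-squares in $\mathbb F_{p}$, and the $\delta=0$ edge case — is elementary.
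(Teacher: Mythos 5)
Your proposal is correct and takes essentially the same route as the paper: both expand $\epsilon^{k}=\alpha^{k}(1+u)^{k}$ modulo $\pi^{3\iota}$ (your multiplication-by-$\epsilon$ matrix is just the paper's induction on $k$), read off $C_{k}/\alpha^{k}$ as an explicit quadratic polynomial in $k$ over $\mathbb{F}_{p}$ whose discriminant equals $\delta(a,c)$ times a nonzero square, and note that in the case $\beta\equiv\gamma\equiv 0\pmod{p}$ everything degenerates to $C_{k}\equiv\alpha^{k}c$. The remaining differences (completing the square versus quoting the discriminant criterion, and your parallel treatment of $\iota=1,2$ via $\pi^{4}=P\pi$) are purely presentational.
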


\begin{proof}
First, by a simple induction on $k \in \mathbb{Z}$, we have
\[
	\frac{\epsilon^{k}}{\alpha^{k}}
	\equiv \begin{cases}
	1+k\cdot\frac{\beta}{\alpha}\pi
		+ k \cdot \left( \frac{k-1}{2}\cdot\frac{\beta^{2}}{\alpha^{2}}
			+ \frac{\gamma}{\alpha} \right) \pi^{2}
				\pmod{\pi^{3}}
					& \text{if $\iota = 1$} \\
	1+k\cdot\frac{\gamma}{\alpha}\pi^{2}
		+ k \cdot \left( \frac{k-1}{2}\cdot\frac{\gamma^{2}}{\alpha^{2}}
			+ \frac{\frac{\beta}{p}}{\alpha} \right) \pi^{4}
				\pmod{\pi^{6}}
					& \text{if $\iota = 2$}
	\end{cases}
\]
for every $k \in \mathbb{Z}$.
This implies that
\[
	\frac{C_{k}}{\alpha^{k}}
	\equiv \begin{cases}
	\frac{\beta^{2}}{2\alpha^{2}}ak^{2}
		- \left( \frac{\beta^{2}}{2\alpha^{2}} - \frac{\gamma}{\alpha} \right)ak
			+ c \pmod{p}
				& \text{if $\iota = 1$}	 \\
	\frac{\gamma^{2}}{2\alpha^{2}}ak^{2}
		- \left( \frac{\gamma^{2}}{2\alpha^{2}} - \frac{\frac{\beta}{p}}{\alpha} \right)ak
			+ c \pmod{p}
				& \text{if $\iota = 2$}	 \\
	\end{cases}
\]
for every $k \in \mathbb{Z}$.
Therefore, $C_{k} \not\equiv 0 \pmod{p}$ for every $k$
if and only if the above quadratic polynomial of $k$ has no zeros in $\mathbb{F}_{p}$.
This implies the assertion.
\end{proof}

\begin{proof} [Proof of \cref{heuristic_recipe}]
First, note that \cref{quadratic_reduction} shows that
the assertion holds if $\beta \equiv \gamma \equiv 0 \pmod{p}$.

Suppose that $\beta \not\equiv 0 \pmod{p}$,
or $\beta \equiv 0 \pmod{p}$ and $\gamma \not\equiv 0 \pmod{p}$.
For every $m \in \mathbb{Z}$, define $(a^{(m)}, b^{(m)}, c^{(m)}) \in \mathbb{Z}^{\oplus 3}$ by
\[
	a^{(m)}+b^{(m)}\pi^{\iota}+c^{(m)}\pi^{2\iota}
		= (a+c\pi^{2\iota})^{m}.
\]
Then, by the assumption,
we have
$(a, b, c) \equiv (\pm1+pA, pB, c) \pmod{p^{2}}$ with some $A, B \in \mathbb{Z}$,
hence $(a^{(m)}, b^{(m)}, c^{(m)}) \equiv ((\pm 1)^{m}, 0, \pm c) \pmod{p}$.
Therefore, for every positive even integer $m < p$,
we have
\[
	\delta(a^{(m)}, c^{(m)}) \equiv
	\begin{cases}
	\rho(\alpha, \beta, \gamma)^{2} \mp 2cm \pmod{p}
		& \text{if $\beta \not\equiv 0 \pmod{p}$} \\
	\rho(\alpha, \gamma, p^{-1}\beta)^{2} \mp 2cm \pmod{p}
		& \text{if $\beta \equiv 0 \pmod{p}$ and $\gamma \not\equiv 0 \pmod{p}$}
	\end{cases}.
\]

Suppose that $\beta \not\equiv 0 \pmod{p}$.
Then, by taking into account of \cref{Fujiwara} and \cref{quadratic_reduction},
what we have to prove is that
\[
	\left\{ \rho(\alpha, \beta, \gamma)^{2}-4cm' \bmod{p} \setmid 1 \leq m' \leq \frac{p-1}{2} \right\}
\]
contains a quadratic non-residue.
In particular, it is sufficient to prove that
\[
	B := \left\{ -\frac{1}{4c}\rho(\alpha, \beta, \gamma)^{2}+m' \bmod{p} \setmid 1 \leq m' \leq \frac{p-1}{2} \right\}
\]
contains both quadratic residues and non-residues,
i.e.,
\[
	\left| \sum_{n \in B} \left( \frac{n}{p} \right) \right| < \frac{p-3}{2},
\]
where $(n/p)$ is the quadratic residue symbol.
If $p = 19, 23, 31$ or $p \geq 37$,
then the above inequality follows from an explicit version of the P\'olya-Vinogradov inequality,
e.g.\ the following one due to Pomerance \cite{Pomerance}:
\[
	\left|\sum_{M \leq n \leq N} \left( \frac{n}{p} \right) \right|
	\leq \begin{cases}
	\displaystyle \left( \frac{2}{\pi^{2}}\log p + \frac{4}{\pi^{2}}\log\log p + \frac{3}{2}\right)p^{1/2} & \text{if $p \equiv 1 \pmod{4}$} \\
	\displaystyle \left(\frac{1}{2\pi}\log p + \frac{1}{\pi}\log\log p + 1 \right)p^{1/2} & \text{if $p \equiv 3 \pmod{4}$},
	\end{cases}
\]
For $p = 11, 13, 17, 29$,
we can check that $B$ contains both quadratic residues and non-residues.
In fact, we have the following table,
which implies the desired inequality.
Here, note that $(-n/p) \equiv (-1)^{(p-1)/2}(n/p)$.
\begin{table}[h]
	\begin{flushleft}
	\begin{tabular}{|c|c|c|c|c|c|c|c|c|c|c|c|} \hline
		$n$ & $0$ & $1$ & $2$ & $3$ & $4$ & $5$ & $6$ & $7$ & $8$ & $9$ & $10$ \\ \hline
		$\left( \frac{n}{11} \right)$ & $0$ & $1$ & $-1$ & $1$ & $1$ & $1$ & $-1$ & $-1$ & $-1$ & $1$ & $-1$ \\ \hline
	\end{tabular}
	\end{flushleft}
	
	\begin{flushleft}
	\begin{tabular}{|c|c|c|c|c|c|c|c|c|c|c|c|c|c|} \hline
		$n$ & $0$ & $1$ & $2$ & $3$ & $4$ & $5$ & $6$ & $7$ & $8$ & $9$ & $10$ & $11$ & $12$ \\ \hline
		$\left( \frac{n}{13} \right)$ & $0$ & $1$ & $-1$ & $-1$ & $1$ & $-1$ & $1$ & $1$ & $-1$ & $1$ & $-1$ & $-1$ & $1$ \\ \hline
	\end{tabular}
	\end{flushleft}
	
	\begin{flushleft}
	\begin{tabular}{|c|c|c|c|c|c|c|c|c|c|c|c|c|} \hline
		$n$ & $0$ & $1$ & $2$ & $3$ & $4$ & $5$ & $6$ & $7$ & $8$ & $9$ & $\dots$ & $16$ \\ \hline
		$\left( \frac{n}{17} \right)$ & $0$ & $1$ & $1$ & $-1$ & $1$ & $-1$ & $-1$ & $-1$ & $1$ & $1$ & $\dots$ & $1$ \\ \hline
	\end{tabular}
	\end{flushleft}
	
	\begin{flushleft}
	\begin{tabular}{|c|c|c|c|c|c|c|c|c|c|c|c|c|c|c|c|c|c|c|} \hline
		$n$ & $0$ & $1$ & $2$ & $3$ & $4$ & $5$ & $6$ & $7$ & $8$ & $9$ & $10$ & $11$ & $12$ & $13$ & $14$ & $15$ & $\dots$ & $29$ \\ \hline
		$\left( \frac{n}{29} \right)$ & $0$ & $1$ & $-1$ & $-1$ & $1$ & $1$ & $1$ & $1$ & $-1$ & $1$ & $-1$ & $-1$ & $-1$ & $1$ & $-1$ & $-1$ & $\dots$ & $1$ \\ \hline
	\end{tabular}
	\end{flushleft}
\end{table}

If $P = 7$, then we have $\epsilon = 4+2 \cdot 7^{1/3}+7^{2/3}$,
so $\beta \not\equiv 0 \pmod{7}$ and $\rho(\alpha, \beta, \gamma) \equiv 1 \pmod{7}$.
Therefore, we have $\delta(a^{(m)}, c^{(m)}) \equiv 1 - 2acm \pmod{7}$.
Suppose that $a \equiv 1 \pmod{7}$.
In this case, if $c \equiv 1, 2, 3, 4, 5, 6 \pmod{7}$,
then we can take $m = 6, 4, 2, 2, 4, 2$ respectively.
Suppose that $a \equiv -1 \pmod{7}$.
In this case, if $c \equiv 1, 2, 3, 4, 5, 6 \pmod{7}$,
then we can take $m = 2, 4, 2, 2, 4, 6$ respectively
according to the case of $a \equiv 1 \pmod{7}$.

If $P = 14$, then we have $\epsilon = 1+2 \cdot 7^{1/3}-7^{2/3}$,
so $\beta \not\equiv 0 \pmod{7}$ and $\rho(\alpha, \beta, \gamma) \equiv 5 \pmod{7}$.
Therefore, we have $\delta(a^{(m)}, c^{(m)}) \equiv 4 - 2acm \pmod{7}$.
Suppose that $a \equiv 1 \pmod{7}$.
In this case, if $c \equiv 1, 2, 3, 4, 5, 6 \pmod{7}$,
then we can take $m = 4, 2, 2, 6, 2, 4$ respectively.
The case of $a \equiv -1 \pmod{7}$ is similar.

If $P = 5$, then we have
$\epsilon = 41+24 \cdot 5^{1/3}+14 \cdot 5^{2/3}$,
so $\beta \not\equiv 0 \pmod{5}$ and $\rho(\alpha, \beta, \gamma) \equiv 1 \pmod{5}$.
Suppose that $a \equiv 1 \pmod{7}$.
In this case, if $c \equiv 1, 2, 3 \pmod{5}$, then we can take $m = 2, 2, 4$.
(Note, however, that if $c \equiv 4 \pmod{5}$, then we cannot take even $m$.)
The case of $a \equiv -1 \pmod{5}$ is similar.

If $P = 6$, then since $\beta \equiv \gamma \equiv 0 \pmod{3}$
we obtain the conclusion.
If $P = 3$, then by assumption, we have $a \equiv 2 \pmod{3}$,
hence $(a^{(2)}, c^{(2)}) \equiv (1, c) \pmod{3}$.
Since $\epsilon = 2-3^{2/3}$, we have $\beta = 0$ and $\rho(\alpha, \gamma, 3^{-1}\beta) \equiv 2 \pmod{3}$.
Therefore, we have
\[
	\delta(a^{(2)}, c^{(2)}) \equiv 1-2c \pmod{3},
\]
which is quadratic non-residue if and only if $c \equiv 1 \pmod{3}$.
The case of $a \equiv -1 \pmod{3}$ is similar.
This completes the proof in the case of $\beta \not\equiv 0 \pmod{p}$.
The case where $\beta \equiv 0 \pmod{p}$ and $\gamma \not\equiv 0 \pmod{p}$ is similar.\end{proof}

\section{Proof of the main theorem}

\begin{proof} [Proof of \cref{main_odd}]
First, suppose that $p \neq 3$ and $\iota = 1$.
Let $f(X, Y) = P(3PX\pm1)^{3} + (3PY+1)^{3}$ and $g(X, Y) = P(3PX\mp1)^{3} + (3PY+3)^{3}$
according to $P \equiv \pm1 \pmod{3}$.
\footnote{
	In fact, we can use more general polynomials to generate $b_{j}, c_{j}$ with small absolute values.
	For example, if $P = p \equiv 1 \bmod{3}$,
	then we can use $f(X, Y) = P(3X+1)^{3} + (3Y+1)^{3}$ and $g(X, Y) = P(3X-1)^{3} + (3Y)^{3}$.
	Here, we take $f$ and $g$ so that the proof of \cref{main_odd} gets simple.
	}
Then, since $\gcd(f(0, 0), f(0, 1), f(0, -1)) = \gcd(g(0, 0), g(0, 1), g(0, -1)) = 1$,
we have $\gcd(f(x, y) \mid (x, y) \in \mathbb{Z}^{\oplus 2}) = \gcd(g(x, y) \mid (x, y) \in \mathbb{Z}^{\oplus 2}) = 1$.
Therefore, by \cref{HBM},
there exist infinitely many distinct prime numbers of the form
$q = f(B, C) \ \text{or} \ g(B, C)$ with $(B, C) \in \mathbb{Z}^{\oplus 2}$ such that $q \equiv 2 \pmod{3}$.
Among them,
we can take distinct $(n-3)/2$ prime numbers $q_{j} = f(B_{j}, C_{j}) \ \text{or} \ g(B_{j}, C_{j})$
($1 \leq j \leq (n-3)/2$)
so that $\gcd(3P, \sum_{j} b_{j}^{-1}c_{j}) = 1$,
where $(b_{j}, c_{j}) := (3PB_{j}+1, 3PC_{j}+1) \ \text{or} \ (3PB_{j}+1, 3PC_{j}+3)$
according to whether $q_{j} = f(B_{j}, C_{j}) \ \text{or} \ g(B_{j}, C_{j})$.
Note that if $P \equiv 0 \pmod{2}$,
then the condition $\prod_{j} b_{j} \equiv 1 \pmod{2}$ in \cref{recipe_odd} holds
for arbitrary $(n-3)/2$-tuple $((b_{j}, c_{j}))_{1 \leq j \leq (n-3)/2}$ taken as above.

For each $(n-3)/2$-tuple $((b_{j}, c_{j}))_{1 \leq j \leq (n-3)/2}$ taken as above,
by \cref{Fermat},
there exist infinitely many prime numbers $l \equiv 2 \pmod{3}$
and a positive even integer $m < p$ such that
$l > \max\{ p, b_{j}, c_{j} \mid j = 1, 2, \dots, (n-3)/2 \}$
and every primitive solution of $x^{3}+P^{\iota}y^{3} = l^{m}z^{n}$ satisfies $x \equiv y \equiv 0 \pmod{l}$.
Therefore, \cref{recipe_odd} implies that the equation
\[
	(X^{3}+P^{\iota}Y^{3}) \prod_{j = 1}^{\frac{n-3}{2}}
		(b_{j}^{2}X^{2} + b_{j}c_{j}XY + c_{j}^{2}Y^{2}) = l^{m} Z^{n}
\]
violates the local-global principle.
The non-singularity follows from the fact that
$q_{j}$ and $q_{k}$ are distinct prime numbers,
hence $[b_{j} : c_{j}] \neq [b_{k} : c_{k}]$ for any $j \neq k$.
The infinitude of the non-isomorphy classes follows from the following \cref{infinitude}.

Next, suppose that $p \neq 3$ and $\iota = 2$.
Let $f(X, Y) = P^{2}(3PX+1)^{3} + (3PY+1)^{3}$ and $g(X, Y) = P^{2}(3PX-1)^{3} + (3PY+3)^{3}$.
Then, since $\gcd(f(0, 0), f(0, \pm 1)) = \gcd(g(0, 0), g(0, \pm1)) = 1$,
we have $\gcd(f(x, y) \mid (x, y) \in \mathbb{Z}^{\oplus 2}) = \gcd(g(x, y) \mid (x, y) \in \mathbb{Z}^{\oplus 2}) = 1$.
The rest part is the same as the above argument.

Finally, suppose that $p = 3$.
Then, we combine $f_{\pm}(X, Y) = P^{\iota}(PX\pm1)^{3}+(PY-1)^{3}$
to generate $(b_{j}, c_{j})$
so that we can apply \cref{recipe_odd} with a help of \cref{Fermat}.
This completes the proof.
\end{proof}

\begin{lemma} \label{infinitude}
Let $n \in \mathbb{Z}_{\geq 5}$ be an odd integer,
$a \in \mathbb{Z} \setminus \{ 0 \}$.
Let $\mathcal{P} \subset \mathbb{Z}^{\oplus 2}$ be an infinite set of
2-dimensional integral vectors $(b, c)$
such that $[b : c] \neq [b' : c']$ as a rational point of the projective line $\mathbb{P}^{1}$
for each distinct $(b, c), (b', c') \in P$.
For each $\frac{n-3}{2}$-tuple
$\bm{v} = (b_{j}, c_{j})_{1 \leq j \leq \frac{n-3}{2}} \in \mathcal{P}^{\frac{n-3}{2}}$,
let $C(a, \bm{v})$ be the plane curve defined by
\[
	(X^{3}+aY^{3})\prod_{j = 1}^{\frac{n-3}{2}} (b_{j}^{2}X^{2} + b_{j}c_{j} XY + c_{j}^{2}Y^{2}) = Z^{n}.
\]
Then, the set
\[
	\mathcal{C}_{a, n} := \left\{ C(a, \bm{v}) \; \middle| \; \bm{v} \in \mathcal{P}^{\frac{n-3}{2}} \right\}
\]
contains infinitely many non-singular curves non-isomorphic to each other over $\mathbb{C}$.
\end{lemma}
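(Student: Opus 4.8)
The plan is to reduce any $\mathbb{C}$-isomorphism $C(a,\bm{v})\cong C(a,\bm{v}')$ first to a projective linear transformation of $\mathbb{P}^{2}$, then to a projective transformation of $\mathbb{P}^{1}$ carrying the branch locus of one curve onto that of the other, and finally to observe that only finitely many $\bm{v}$ can share such a branch locus up to $\mathrm{PGL}_{2}$. First, for non-singularity: $C(a,\bm{v})$ is the superelliptic curve $Z^{n}=G_{\bm{v}}(X,Y)$ with $G_{\bm{v}}(X,Y)=(X^{3}+aY^{3})\prod_{j}(b_{j}^{2}X^{2}+b_{j}c_{j}XY+c_{j}^{2}Y^{2})$ a binary form of degree $n$, and such a curve is non-singular if and only if $G_{\bm{v}}$ has $n$ distinct roots in $\mathbb{P}^{1}$ (any singular point lies on $\{Z=0\}$, since $\partial_{Z}(Z^{n}-G_{\bm{v}})$ vanishes only there). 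The roots of $X^{3}+aY^{3}$ are $[(-a)^{1/3}\zeta_{3}^{k}:1]$ and those of the $j$-th quadratic are $[(c_{j}/b_{j})\zeta_{3}^{\pm1}:1]$; since $\zeta_{3}\notin\mathbb{Q}$ and the ratios $[b_{j}:c_{j}]$ are pairwise distinct, these $n$ roots are distinct as soon as $b_{j},c_{j}\neq0$ and $(c_{j}/b_{j})^{3}\neq-a$. Thus after removing at most three members of $\mathcal{P}$ we obtain an infinite $\mathcal{P}'\subseteq\mathcal{P}$ for which every $\bm{v}\in(\mathcal{P}')^{(n-3)/2}$ yields a non-singular $C(a,\bm{v})$.

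For the reduction to $\mathrm{PGL}_{2}$: it is classical that a non-singular plane curve of degree $\geq4$ is non-hyperelliptic and has a plane model unique up to $\mathrm{PGL}_{3}(\mathbb{C})$, so every isomorphism $C(a,\bm{v})\cong C(a,\bm{v}')$ is induced by some $\phi\in\mathrm{PGL}_{3}(\mathbb{C})$, and $\mathrm{Aut}(C(a,\bm{v}))\subseteq\mathrm{PGL}_{3}(\mathbb{C})$. The $n$ points $C(a,\bm{v})\cap\{Z=0\}$ are the ramification points of the degree-$n$ projection $[X:Y:Z]\mapsto[X:Y]$, equivalently the hyperflexes produced by the action $Z\mapsto\zeta_{n}Z$: at $[\rho:1:0]$ with $\rho$ a root of $G_{\bm{v}}$ the tangent line is $\{X=\rho Y\}$, it meets the curve only there (with multiplicity $n$), and all $n$ such tangent lines pass through $Q=[0:0:1]$. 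One checks that $C(a,\bm{v})$ has an additional hyperflex only when the root set $S_{\bm{v}}$ of $G_{\bm{v}}$ is $\mathrm{PGL}_{2}(\mathbb{C})$-equivalent to the set of $n$-th roots of unity, and that $S_{\bm{v}}$ admits a non-trivial projective symmetry only for $\bm{v}$ in a proper closed subvariety of the parameter space. Discarding such $\bm{v}$, the set $C(a,\bm{v})\cap\{Z=0\}$ and the point $Q$ become intrinsic to the abstract curve, so $\phi$ fixes $Q$ and preserves $\{Z=0\}$, hence is block-diagonal $\phi=(M,\lambda)$ with $M\in\mathrm{GL}_{2}(\mathbb{C})$; then $G_{\bm{v}'}=\lambda^{n}\,(G_{\bm{v}}\circ M^{-1})$, so $[M]\in\mathrm{PGL}_{2}(\mathbb{C})$ carries $S_{\bm{v}}$ onto $S_{\bm{v}'}$.

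It remains to produce infinitely many pairwise $\mathrm{PGL}_{2}(\mathbb{C})$-inequivalent $S_{\bm{v}}$. Writing $R$ for the (fixed) root set of $X^{3}+aY^{3}$, we have $S_{\bm{v}}=R\sqcup\bigsqcup_{j}\{(c_{j}/b_{j})\zeta_{3},(c_{j}/b_{j})\zeta_{3}^{2}\}$, which depends only on $\bm{t}_{\bm{v}}=(c_{1}/b_{1},\dots,c_{(n-3)/2}/b_{(n-3)/2})$, and $\bm{v}\mapsto\bm{t}_{\bm{v}}$ is injective up to reordering the coordinates. On the open locus $U\subseteq\mathbb{A}^{(n-3)/2}$ where the $n$ listed points are distinct, let $\Psi$ be the morphism sending $\bm{t}$ to the $\mathrm{PGL}_{2}(\mathbb{C})$-class of $R\sqcup\bigsqcup_{j}\{t_{j}\zeta_{3},t_{j}\zeta_{3}^{2}\}$ in the space of $n$ unordered points of $\mathbb{P}^{1}$ modulo $\mathrm{PGL}_{2}(\mathbb{C})$; $\Psi$ is easily checked to be non-constant (e.g. for $n\geq11$ by comparing dimensions, and by a cross-ratio computation for $n=5,7,9$), so each of its fibres, like the finitely many exceptional loci of the previous paragraph, is a proper closed subvariety of the irreducible $\mathbb{A}^{(n-3)/2}$. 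Since $T'=\{c/b\mid(b,c)\in\mathcal{P}'\}$ is infinite, $(T')^{(n-3)/2}$ is Zariski-dense in $\mathbb{A}^{(n-3)/2}$, so one may choose inductively $\bm{v}^{(1)},\bm{v}^{(2)},\dots\in(\mathcal{P}')^{(n-3)/2}$ with every $C(a,\bm{v}^{(i)})$ non-singular, with $\mathrm{Aut}(C(a,\bm{v}^{(i)}))$ equal to the cyclic group $\langle Z\mapsto\zeta_{n}Z\rangle$ and with no extra hyperflex, and with the values $\Psi(\bm{t}_{\bm{v}^{(i)}})$ pairwise distinct; by the previous paragraph these curves all lie in $\mathcal{C}_{a,n}$ and are pairwise non-isomorphic over $\mathbb{C}$.

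The main obstacle is the reduction to $\mathrm{PGL}_{2}$: showing that a general isomorphism cannot mix the coordinate $Z$ with $X,Y$, i.e. that the ramification locus of the degree-$n$ projection is intrinsic. This requires a genuine analysis of $\mathrm{Aut}(C(a,\bm{v}))$ — equivalently of the projective symmetries of the configuration $S_{\bm{v}}$ — together with the hyperflexes of $C(a,\bm{v})$, and the verification that the exceptional $\bm{v}$ (those with extra symmetry or extra hyperflexes) form a proper closed subvariety so that infinitely many admissible $\bm{v}$ survive; once this is granted the remaining steps are routine Zariski-density bookkeeping.
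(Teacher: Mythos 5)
Your overall strategy (isomorphism $\Rightarrow$ projective linearity by uniqueness of the plane model for smooth curves of degree $\geq 4$ $\Rightarrow$ preservation of the line $Z=0$ and the concurrence point $Q$ $\Rightarrow$ an element of $\mathrm{PGL}_{2}(\mathbb{C})$ matching the root configurations $\Rightarrow$ counting configurations) could in principle work, but its central step is not proved. Everything hinges on the two assertions that (i) $C(a,\bm{v})$ has a hyperflex off $\{Z=0\}$ only when $S_{\bm{v}}$ is $\mathrm{PGL}_{2}(\mathbb{C})$-equivalent to the $n$-th roots of unity, and (ii) the $\bm{v}$ for which $S_{\bm{v}}$ has extra projective symmetry (equivalently, for which $\mathrm{Aut}(C(a,\bm{v}))$ is larger than $\langle Z\mapsto \zeta_{n}Z\rangle$) form a proper closed subvariety. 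Both are introduced with ``one checks'' and never argued; the specific characterization in (i) is in fact doubtful as stated, and even the weaker statement you actually need --- that the exceptional $\bm{v}$ lie in a proper Zariski-closed subset --- requires exhibiting at least one configuration with no extra hyperflex and no extra symmetry and verifying closedness, none of which is done. You acknowledge this yourself in the final paragraph (``once this is granted the remaining steps are routine''), so as it stands the block-diagonal reduction, and with it the whole proof, is a genuine gap rather than a routine verification. (The remaining ingredients --- distinctness of the roots after discarding finitely many elements of $\mathcal{P}$, linearity of isomorphisms of smooth plane curves, Zariski density of $(T')^{(n-3)/2}$, non-constancy of $\Psi$ --- are fine or easily completed.)

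For comparison, the paper's proof deliberately avoids this analysis: it never shows that an abstract isomorphism is linear or respects the projection. It only uses that $\mathrm{Aut}(C)$ is finite (genus $\tfrac{(n-1)(n-2)}{2}>1$, Schwarz/Hurwitz), so each curve $C$ produces only finitely many punctured curves $(C/\langle\varphi\rangle)^{\mathrm{nb}}$ up to isomorphism; the $n$-punctured line $L(a,\bm{v})$ (the complement in $\mathbb{P}^{1}$ of the zero locus of the binary form) is one of them, namely the unbranched locus of the quotient by $Z\mapsto\zeta_{n}Z$, and there are infinitely many pairwise non-isomorphic $L(a,\bm{v})$ because an isomorphism of $n$-punctured lines extends to $\mathrm{Aut}(\mathbb{P}^{1})$ and is determined by the images of three punctures. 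Hence finitely many isomorphism classes of curves could only account for finitely many classes of punctured lines, a contradiction. If you want to keep your route you must actually carry out the hyperflex/automorphism analysis; otherwise the quotient-counting argument sidesteps exactly the point where your proposal breaks down.
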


\begin{proof}
Let $C$ be a non-singular curve in $\mathcal{C}_{a, n}$.
Then, since its genus $\frac{(n-2)(n-1)}{2} > 1$,
Schwarz' theorem \cite{Schwarz} ensures that
the automorphism group $\mathrm{Aut}(C)$ of $C$ is a finite group.
\footnote{
	In fact, Hurwitz' theorem \cite{Hurwitz} gives an explicit bound $\#\mathrm{Aut}(C) \leq 84(g-1)$.
	}
In particular, the set
\[
	\mathrm{Quot}(C) := \left\{ (C / \langle \varphi \rangle)^{\mathrm{nb}}
		\; \middle| \; \varphi \in \mathrm{Aut}(C) \right\}
\]
contains finitely many
$n$-punctured lines
(i.e., $\mathbb{P}^{1}$ minus $n$ points).
Here, $C / \langle \varphi \rangle$ denotes the quotient of $C$
by the cyclic group $\langle \varphi \rangle$ generated by $\varphi$,
and $(C / \langle \varphi \rangle)^{\mathrm{nb}}$ denotes its non-branched locus,
i.e., the image of the points each of whose $\varphi$-orbit consists of
exactly $\#\langle \varphi \rangle$ distinct points.

On the other hand,
for each $\frac{n-3}{2}$-tuple $\bm{v} = (b_{j}, c_{j})_{1 \leq j \leq \frac{n-3}{2}} \in \mathcal{P}^{\frac{n-3}{2}}$,
let $L(a, \bm{v})$ be a punctured line defined by
\[
	(X^{3}+aY^{3}) \prod_{i = 1}^{\frac{n-3}{2}} (b_{j}^{2}X^{2} + b_{j}c_{j} XY + c_{j}^{2}Y^{2}) \neq 0.
\]
Then, we can prove that the set consisting of them
\[
	\mathcal{L}_{a}^{\leq n} := \left\{ L(a, \bm{v})
		\; \middle| \; \bm{v} \in \mathcal{P}^{\frac{n-3}{2}} \right\}
\]
contains infinitely many non-isomorphic $n$-punctured lines.
Indeed, for each $\bm{v} \in \mathcal{P}^{\frac{n-3}{2}}$,
there exist at most $n(n-1)(n-2)$ tuples $\bm{v}' \in \mathcal{P}^{\frac{n-3}{2}}$
such that $L(a, \bm{v}')$ is isomorphic to $L(a, \bm{v})$
because such an isomorphism is extended to an element of $\mathrm{Aut}(\mathbb{P}^{1})$,
which is uniquely determined from the images of three points,
say those satisfying $X^{3}+aY^{3} = 0$.
As a consequence,
we see that
the set $\mathcal{L}_{a}^{\leq n} / \simeq_{\mathbb{C}}$
contains infinitely many isomorphism classes of $n$-punctured lines over $\mathbb{C}$.

Finally, note that we have a natural injection
\[
	\mathcal{L}_{a}^{\leq n} / \simeq_{\mathbb{C}}
		\ \hookrightarrow
			\left. \left( \bigcup_{C \in \mathcal{C}_{a, n}} \mathrm{Quot}(C) \right) \right/ \simeq_{\mathbb{C}}
\]
induced by $L(a, \bm{v}) \mapsto (C(a, \bm{v}) / \langle \varphi \rangle)^{\mathrm{nb}}$,
where $\varphi([X : Y : Z]) = [X : Y : \zeta_{n}Z]$
with a fixed primitive $n$-th root of unity $\zeta_{n} \in \mathbb{C}$.
Therefore, $\mathcal{C}_{a, n}$ contains infinitely many isomorphism classes of non-singular curves over $\mathbb{C}$
as claimed.
\end{proof}

\section{Examples}

In this section, we demonstrate that the proof of \cref{main_odd} actually gives
explicit counterexamples to the local-global principle of the form
\[
	(X^{3}+p^{\iota}Y^{3}) \prod_{j = 1}^{\frac{n-3}{2}} (b_{j}^{2}X^{2}+b_{j}c_{j}XY+c_{j}^{2}Y^{2}) = LZ^{n}.
\]

\subsection{degree 7}

First of all,
since the fundamental unit of $\mathbb{Q}(7^{1/3})$ is $\epsilon = 4+2 \cdot 7^{1/3}+ 7^{2/3}$,
\cref{AACM_cubic} is verified for $p = 7$.
Hence, we have $\iota = 1$, and so we can actually take $n = 7$ in \cref{main_odd}.

In order to generate the coefficients $(b_{1}, c_{1}; b_{2}, c_{2})$,
we can use the cubic polynomial $f(X, Y) = 7(3X+1)^{3}+(3Y+1)^{3}$
in the proof of \cref{main_odd}.
Indeed, by \cref{HBM},
the set $f(\mathbb{Z}^{\oplus 2})$ contains infinitely many prime numbers,
for example,
\begin{align*}
71 &= 7 \cdot (3 \cdot 0 + 1)^{3} + (3 \cdot 1 + 1)^{3}, \\
449 &= 7 \cdot (3 \cdot 1 + 1)^{3} + (3 \cdot 0 + 1)^{3}, \\
503 &= 7 \cdot (3 \cdot 11 + 1)^{3} + (3 \cdot (-22) + 1)^{3}, \\
&\ \vdots \quad \text{etc}.
\end{align*}
Among such $(b_{j}, c_{j}) = (3X+1, 3Y+1)$,
we can take,
for example, $(b_{1}, c_{1}) = (1, 4)$ and $(b_{2}, c_{2}) = (4, 1)$.

For each choice of the above coefficients,
we can take $L = l^{m}$
with a prime number $l > \max\{ p, b_{1}, c_{1}, b_{2}, c_{2} \} (= 7)$
and an even integer $m \in \mathbb{Z}_{\geq 2}$
so that every primitive solution of $x^{3}+7y^{3} = l^{m}z^{7}$
satisfies $x \equiv y \equiv 0 \pmod{l}$
(cf.\  condition (6) in \cref{recipe_odd}).
In fact,
as in the proof of \cref{Fermat},
we can generate such $l$ as integral values of another cubic polynomial
$h(A, C) = (21A+1)^{3}+49(21C+1)^{3}$.
For example,
\begin{align*}
262193 &= (21 \cdot 3+1)^{3}+49(21 \cdot 0+1)^{3}, \\
452831 &= (21 \cdot (-2)+1)^{3}+49(21 \cdot 1+1)^{3}, \\
521753 &= (21 \cdot 0+1)^{3}+49(21 \cdot 1+1)^{3}, \\
&\ \vdots \quad \text{etc}.
\end{align*}
Here, we take $l = 262193$ with $(a, c) := (21A+1, 21C+1) = (64, 1)$.

Finally, to generate the exponent $m$,
we use \cref{Fujiwara,quadratic_reduction}.
They ensure that every primitive solution of $x^{3}+7y^{3} = 262193^{m}z^{7}$
satisfies $x \equiv y \equiv 0 \pmod{l}$
whenever $\delta(a^{m}, mc) \equiv \delta(1, m) \equiv 4-2m \pmod{7}$ is a non-quadratic residue.
Thus, we can take $m = 4$.

As a consequence, we obtain an explicit counterexample
to the local-global principle:
\[
	(X^{3}+7Y^{3})(X^{2}+4XY+16Y^{2})(16X^{2}+4XY+Y^{2}) = 262193^{4}Z^{7}.
\]

\subsection{degree 9}

In this case, we can take $p = 3$.
First of all,
since the fundamental unit of $\mathbb{Q}(3^{1/3})$ is $\epsilon = 4+3 \cdot 3^{1/3}+2 \cdot 3^{2/3}$,
we have $\iota = 2$.
Our goal is to obtain the parameters $(b_{1}, c_{1}; b_{2}, c_{2}; b_{3}, c_{3}; L)$
so that the equation
\[
	(X^{3}+7Y^{3})(b_{1}^{2}X^{2}+b_{1}c_{1}XY+c_{1}^{2}Y^{2})(b_{2}^{2}X^{2}+b_{2}c_{2}XY+c_{2}^{2}Y^{2})(b_{3}^{2}X^{2}+b_{3}c_{3}XY+c_{3}^{2}Y^{2})
	= LZ^{9}
\]
defines a non-singular plane curve which violates the local-global principle.

In order to generate the coefficients $(b_{1}, c_{1}; b_{2}, c_{2}; b_{3}, c_{3})$,
we can combine the cubic polynomials
$f_{\pm}(X, Y) = 9(3X \pm 1)^{3}+(3Y-1)^{3}$
as in the proof of \cref{main_odd}.
Indeed, by \cref{HBM},
the sets $f_{\pm}(\mathbb{Z}^{\oplus 2})$ contain infinitely many prime numbers,
for example,
\begin{align*}
17 &= 9 \cdot (3 \cdot 0 + 1)^{3} + (3 \cdot 1 - 1)^{3}, \\
53 &= 9 \cdot (3 \cdot (-1) + 1)^{3} + (3 \cdot 2 - 1)^{3}, \\
233 &= 9 \cdot (3 \cdot 1 + 1)^{3} + (3 \cdot (-2) - 1)^{3}, \\
&\ \vdots \quad \text{etc}
\end{align*}
and
\begin{align*}
71 &= 9 \cdot (3 \cdot 1 - 1)^{3} + (3 \cdot 0 - 1)^{3}, \\
197 &= 9 \cdot (3 \cdot 1 - 1)^{3} + (3 \cdot 2 - 1)^{3}, \\
503 &= 9 \cdot (3 \cdot 0 - 1)^{3} + (3 \cdot 3 - 1)^{3}, \\
&\ \vdots \quad \text{etc}
\end{align*}
respectively.
Among such $(b_{j}, c_{j}) = (3X \pm 1, 3Y+1)$,
we can take them
so that $\sum_{j} b_{j}^{-1}c_{j} \not\equiv 0 \pmod{3}$ (cf.\ condition (4) in \cref{recipe_odd}).
For example, we can take
$(b_{1}, c_{1}) = (1, 2)$, $(b_{2}, c_{2}) = (-2, 5)$, and $(b_{3}, c_{3}) = (2, -1)$
corresponding to $17, 53, 71$ in the above lists respectively.

For each choice of the above coefficients,
we can take $L = l^{m}$
with a prime number $l > \max\{ p, b_{1}, c_{1}, b_{2}, c_{2}, b_{3}, c_{3} \} (= 5)$
and an even integer $m \in \mathbb{Z}_{\geq 2}$
so that every primitive solution of $x^{3}+9y^{3} = l^{m}z^{9}$
satisfies $x \equiv y \equiv 0 \pmod{l}$
(cf.\ condition (6) in \cref{recipe_odd}).
In fact,
as in the proof of \cref{Fermat},
we can generate such $l$ as integral values of another cubic polynomial
$h(A, C) = (3A-1)^{3}+81(3C+1)^{3}$.
For example,
\begin{align*}
431 &= (3 \cdot 3-1)^{3}+81(3 \cdot 0+1)^{3}, \\
647 &= (3 \cdot 0-1)^{3}+81(3 \cdot 1+1)^{3}, \\
773 &= (3 \cdot 2-1)^{3}+81(3 \cdot 1+1)^{3}, \\
&\ \vdots \quad \text{etc}.
\end{align*}
Here, we take $l = 431$ with $(a, c) := (3A-1, 3C+1) = (8, 1)$.

Finally, to generate the exponent $m$,
we use \cref{Fujiwara,quadratic_reduction}.
Indeed,
since $\delta(a^{m}, mc) \equiv \delta((-1)^{m}, 1) \equiv 1+(-1)^{m} \equiv 2 \bmod{3}$ for every even $m$,
we see that every primitive solution of $x^{3}+9y^{3} = 431^{m}z^{9}$
satisfies $x \equiv y \equiv 0 \pmod{l}$
whenever $m = 2$.

As a consequence, we obtain an explicit counterexample
to the local-global principle:
\[
	(X^{3}+9Y^{3})(X^{2}+2XY+4Y^{2})(4X^{2}-10XY+25Y^{2})(4X^{2}-2XY+Y^{2}) = 431^{2}Z^{9}.
\]

\subsection{degree 11}

First of all,
since the fundamental unit of $\mathbb{Q}(11^{1/3})$ is $\epsilon = 1+4 \cdot 11^{1/3} - 2 \cdot 11^{2/3}$,
\cref{AACM_cubic} is verified for $p = 11$.
Hence, we have $\iota = 1$, and so we can actually take $n = 11$.
Our goal is to obtain the parameters $(b_{1}, c_{1}; \dots; b_{4}, c_{4}; L)$
so that the equation
\[
	(X^{3}+7Y^{3})(b_{1}^{2}X^{2}+b_{1}c_{1}XY+c_{1}^{2}Y^{2}) \cdots (b_{4}^{2}X^{2}+b_{4}c_{4}XY+c_{4}^{2}Y^{2})
	= LZ^{11}
\]
defines a non-singular plane curve which violates the local-global principle.

In order to generate the coefficients $(b_{1}, c_{1}; \dots; b_{4}, c_{4})$,
we can combine the cubic polynomials
$f(X, Y) = 11(33X-1)^{3}+(33Y+1)^{3}$ and $g(X, Y) = 11(33X+1)^{3}+(33Y+3)^{3}$
as in the proof of \cref{main_odd}.
Indeed, by \cref{HBM},
the set $f(\mathbb{Z}^{\oplus 2})$ contain infinitely many prime numbers,
for example,
\begin{align*}
39293 &= 11 \cdot (33 \cdot 0 - 1)^{3} + (33 \cdot 1 + 1)^{3}, \\
1265903 &= 11 \cdot (33 \cdot (-2) - 1)^{3} + (33 \cdot 5 + 1)^{3}, \\
3060179 &= 11 \cdot (33 \cdot 2 - 1)^{3} + (33 \cdot 1 + 1)^{3}, \\
&\ \vdots \quad \text{etc}
\end{align*}
and
\begin{align*}
182297 &= 11 \cdot (33 \cdot 2 + 1)^{3} + (33 \cdot (-1) + 3)^{3}, \\
2099927 &= 11 \cdot (33 \cdot (-1) + 1)^{3} + (33 \cdot 4 + 3)^{3}, \\
3281393 &= 11 \cdot (33 \cdot 2 + 1)^{3} + (33 \cdot (-1) + 3)^{3}, \\
&\ \vdots \quad \text{etc}
\end{align*}
respectively.
Among such $(b_{j}, c_{j}) = (33X-1, 33Y+1), (22X+1, 33Y+3)$,
we can take them
so that $\sum_{j} b_{j}^{-1}c_{j} \not\equiv 0 \pmod{3}$
and $\sum_{j} b_{j}^{-1}c_{j} \not\equiv 0 \pmod{11}$ (cf.\ conditions (4) and (5) in \cref{recipe_odd}).
For example, we can take
$(b_{1}, c_{1}) = (-1, 1)$, $(b_{2}, c_{2}) = (67, -63)$, $(b_{3}, c_{3}) = (-67, 166)$,
and $(b_{4}, c_{4}) = (-32, 135)$.

For each choice of the above coefficients,
we can take $L = l^{m}$
with a prime number $l > \max\{ p, b_{1}, c_{1}, \dots, b_{4}, c_{4} \} (= 166)$
and an even integer $m \in \mathbb{Z}_{\geq 2}$
so that every primitive solution of $x^{3}+11y^{3} = l^{m}z^{11}$
satisfies $x \equiv y \equiv 0 \pmod{l}$
(cf.\ condition (6) in \cref{recipe_odd}).
In fact,
as in the proof of \cref{Fermat},
we can generate such $l$ as integral values of another cubic polynomial
$h(A, C) = (33A+1)^{3}+121(33C+1)^{3}$.
For example,
\begin{align*}
1000121 &= (33 \cdot 3+1)^{3}+121(33 \cdot 0+1)^{3}, \\
2507693 &= (33 \cdot (-4)+1)^{3}+121(33 \cdot 1+1)^{3}, \\
4574417 &= (33 \cdot 5+1)^{3}+121(33 \cdot 0+1)^{3}, \\
&\ \vdots \quad \text{etc}.
\end{align*}
Here, we take $l = 1000121$ with $(a, c) := (33A+1, 33C+1) = (100, 1)$.

Finally, to generate the exponent $m$,
we use \cref{Fujiwara,quadratic_reduction}.
They ensure that every primitive solution of $x^{3}+11y^{3} = 1000121^{m}z^{11}$
satisfies $x \equiv y \equiv 0 \pmod{l}$
whenever $\delta(a^{m}, mc) \equiv \delta(1, m) \equiv 8-2m \pmod{11}$ is a non-quadratic residue.
Thus, we can take $m = 6$.
As a consequence, we obtain an explicit counterexample
to the local-global principle:
\begin{align*}
	(X^{3}+11Y^{3})(X^{2}-XY+Y^{2})(67^{2} X^{2}-67 \cdot 63XY+63^{2}Y^{2}) & (67^{2}X^{2}-67 \cdot 166XY+166^{2}Y^{2}) & \\
		\times (32^{2}X^{2}-32 \cdot 135XY+135^{2}Y^{2}) & = 1000121^{6}Z^{11}.
\end{align*}

\section*{Appendix : Numerical verification of \cref{AACM_cubic}}

It is easy to verify \cref{AACM_cubic} numerically for small prime numbers $p$.
In fact, by using Magma \cite{Magma},
we have verified this conjecture for both $P = p \ \text{and} \ 2p$ in the range $p < 10^{5}$.
For example,
the following program returns that there exist no counterexamples
of \cref{AACM_cubic} for $P = p$ in the range $p < 10^{5}$.
Here, recall that \cref{AACM_cubic} holds for $P = p$
if the order $\mathbb{Z}[p^{1/3}]$ of $\mathbb{Q}(P^{1/3})$
has a unit $\alpha + \beta p^{1/3} + \gamma p^{2/3}$
with $\alpha, \beta, \gamma \in \mathbb{Z}$ such that $\beta \not\equiv 0 \pmod{p}$.

\begin{lstlisting}[basicstyle=\ttfamily\footnotesize, frame=single]
> Zx<x> := PolynomialRing(Integers());
> for p in [1..10^5] do;
   > if IsPrime(p) then
   > O := EquationOrder(x^3-p); // create the order O := Z[p^{1/3}]
   > U,phi := UnitGroup(O);
   // U.1 := -1 and U.2 := another generator of the unit group of O;
   > Fpy<y> := PolynomialRing(FiniteField(p));
   > h := hom< O -> Fpy | y >; 
   // represent each element of O mod pO as a polynomial of y = p^{1/3}
      > if Coefficient(h(phi(U.2)), 1) eq 0 then;
      // if the coefficient of p^{1/3} for U.2 is 0 mod p, then
      > <p, h(phi(U.2))>;
      // return such a prime number p and the corresponding U.2
      > end if;
   > end if;
> end for;

<3, 2*y^2 + 2>
\end{lstlisting}

Note that the return $p = 3$ is the conjectural unique exception.

Moreover, the our numerical verification for both $P = p \ \text{and} \ 2p$ in the range $p < 10^{5}$ implies that
there exist non-singular plane curves of degree $n$
which violate the local-global principle
for ``most" odd integers $n$ in the sense of natural density:
Let $\mathbb{N} := \mathbb{Z}_{\geq 1}$,
and $\mathbb{N}^{\mathrm{odd}}$ be the set of positive odd integers.
Set
\begin{align*}
	P &:= \left\{ p : \text{prime number} \right\}, \\
	BP &:= \left\{ p \in P \mid p < 10^{5} \right\}, \\
	M &:= \left\{ n \in \mathbb{N}
		\mid n \not\equiv 0 \pmod{p} \ \text{for all $p \in BP$ and}
			\ n \not\equiv 0 \pmod{p^{2}} \ \text{for all $p \in P$} \right\}, \\
	N &:= \mathbb{N} \setminus (M \cup \{ 1 \}), \\
	\mathbb{N}^{\mathrm{odd}} &:= \{ n \in \mathbb{N} \mid \text{$n$ is an odd integer} \}, \\
	N^{\mathrm{odd}} &:= N \cap \mathbb{N}^{\mathrm{odd}},
\end{align*}
then \cref{main_odd} and the above numerical verification of \cref{AACM_cubic}
(with Poonen's construction \cite{Poonen_genus_one} for $n = 3$)
ensures that
we can construct infinitely many explicit non-singular plane curves of degree $n$
which violates the local-global principle
for each $n \in N^{\mathrm{odd}}$.
Moreover,
if we denote the natural density of $S \subset \mathbb{N}$ by $d(S)$ (if it exists),
then we have
\begin{align*}
	d(M) &= \prod_{p \in BP} (1-p^{-1}) \times \prod_{p \in P \setminus BP} (1-p^{-2})
			= \prod_{p \in BP} (1+p^{-1})^{-1} \times \zeta(2)^{-1} \\
		&< 0.0487529
\intertext{and}
	d(\mathbb{N}^{\mathrm{odd}}) &= \frac{1}{2},
\intertext{hence}
	\frac{d(N^{\mathrm{odd}})}{d(\mathbb{N}^{\mathrm{odd}})}
	&= 1 - \frac{d(M)}{d(\mathbb{N}^{\mathrm{odd}})}
		> 0.90249
\end{align*}
Therefore,
at least $90\%$ of odd integers lie in $N^{\mathrm{odd}}$.

\section*{Acknowledgements}

The authors would like to thank Yoshinori Kanamura
for wonderful introduction of classical counterexamples
to the local-global principle by Selmer, Fujiwara, and Fujiwara-Sudo,
and asking which degrees have explicit counterexamples.
His question was the starting point of our study.
The authors would like to thank also Haruka Saida
for valuable discussions.
The authors would like to thank also Prof.\ Ken-ichi Bannai and the anonymous referee
for their careful reading the draft of this article and giving many valuable comments.
The authors are deeply grateful also for referee's valuable comments and suggestions,
especially for her/his introduction of an excellent textbook by Cohen \cite{Cohen}.
The authors would like to thank also Prof.\ Hiroyasu Izeki, Prof.\ Masato Kurihara, Prof.\ Takaaki Tanaka,
and Dr.\ Yusuke Tanuma for careful reading of the draft of this article and many valuable comments.

\begin{bibdiv}
\begin{biblist}
\bibselect{Hasse-principle}
\end{biblist}
\end{bibdiv}

\end{document}